\title{Hilbert schemes and Seshadri constants}
\author[Jonas Baltes]{Jonas Baltes}
\keywords{Seshadri constant, Hilbert scheme, Bridgeland stability, nested Hilbert scheme}
\address{Georg-August-Universität Göttingen, Mathematisches Institut, Bunsenstraße 3-5, 37073 Göttingen, Germany}
\email{jonas.baltes@mathematik.uni-goettingen.de}
\urladdr{https://sites.google.com/view/jonasbaltes}
\date{\today}
\begin{document}
\begin{abstract}
 In this paper we will propose a new method to investigate Seshadri constants, namely by means of (nested) Hilbert schemes. This will allow us to use the geometry of the latter spaces, for example the computations of the nef cone via Bridgeland stability conditions to gain new insights and bounds on Seshadri constants. Moreover, it turns out that many known Seshadri constants turn up in the wall and chamber decomposition of the movable cone of Hilbert schemes.\par
    
\end{abstract}
\maketitle
\numberwithin{theorem}{section}
\section{Introduction}
In \cite{demaillySeshadri} Jean-Pierre Demailly introduced Seshadri constants trying to prove the Fujita conjecture. Since then these constants sparked a lot of interest even without the link to the Fujita conjecture: Examples are the Nagata conjecture with its relation to Hilbert's 14\textsuperscript{th} problem \cite{nagataOriginalPaper}, symplectic packings \cite{biranSymplectic} and several conjectures concerning the Seshadri constants themselves, see e.g.\@ the survey paper \cite{BauerPrimer}. This interest is partially due to the fact that these numbers are in general very hard to come by and they have only been explicitly computed in very few cases.\par

The purpose of this paper is manifold. The overall goal is to show and convince the reader of the usefulness of Hilbert schemes for the study of Seshadri constants. We will proceed as follows:
\begin{itemize}
    \item In \Cref{sec:Observation} we will present numerical findings that propose a direct link between Seshadri constants and walls in the cones of the Hilbert scheme (of length $3$).
    \item In \Cref{sec:ConesOfNested} we will compute all Seshadri constants in terms of cones of the nested Hilbert scheme. In the following \Cref{sec:applications} we link this to the Nagata conjecture and give some applications.
    \item In \Cref{sec:BoundsOnSeshadri} we will give new bounds on Seshadri constants on surfaces by using the computation of the nef cone of the Hilbert scheme.
\end{itemize}
We will now briefly introduce the two main notations. Let $X$ be a smooth projective surface over $\CC$ and $H\in \Pic X$ an ample line bundle.
\begin{itemize}
    \item For any collection of distinct points $x_1,\ldots, x_r\in X$ we define
\begin{equation*}
    \varepsilon(H, x_1,\ldots, x_r) = \inf_{C\subset X} \frac{C.H}{\sum m(C,x_i)},
\end{equation*}
where the infimum is taken over all curves $C\subset X$ and $m(C,x_i)$ is the multiplicity of a curve $C$ at $x_i$. Taking the infimum of this yields the global Seshadri constant $$\varepsilon(H) = \inf_{x\in X} \varepsilon(H,x),$$ and the supremum yields $$\varepsilon(H,r) = \inf_{x_1,\ldots, x_r\in X} \varepsilon(H,x_1, \ldots, x_r).$$ 
    \item The Hilbert scheme of length $n\in \NN$ is defined as
    \begin{equation*}
        X^{[n]} = \{Z\subset X\,|\, \dim Z = 0 \textup{ and }H^0(Z,\oO_Z) = n\}.
    \end{equation*}
    If $h^1(\oO_X) = 0$ we have $\Pic X^{[n]} = \Pic X \oplus B$, where the divisor $2B$ is the divisor of non-reduced schemes.
\end{itemize}
Sticking now to K3 surfaces, we can make use of the results by Bayer--Macrì \cite{bayer2014mmp} who computed the wall-and-chamber decomposition of the movable cone $\Mov X^{[n]}$ with the help of Bridgeland stability conditions.
It turns out that everything is determined by certain Pell-equations, see \Cref{cor:baymacri}. Computing this with the help of the program \cite{python} written by the author, yields the following observation:
\begin{observation}
\label{obs:mainobservation}
    Let $X$ be a K3 surface with $\Pic X = \ZZ H$ of degree\footnote{These are the only degrees in which the Seshadri constants are known.} $H^2 = 2,4,6,8$ or $a^2$ for $a\in 2\NN$. Then $H^{[3]}-\frac{\varepsilon(H)}{2} B$ is a divisor on a wall of $\Mov X^{[3]}$.
\end{observation}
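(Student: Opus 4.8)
The plan is to carry the statement over to the algebraic Mukai lattice $H^{*}_{\mathrm{alg}}(X,\ZZ)=\ZZ\oplus\Pic X\oplus\ZZ$ and to read the walls off from \Cref{cor:baymacri}. For $n=3$ the Mukai vector is $v=(1,0,-2)$ with $v^{2}=4$, and $v^{\perp}$ is spanned by $(0,H,0)$ and $(1,0,2)$. Under the Bayer--Macrì isometry $\theta_{v}\colon v^{\perp}\xrightarrow{\sim}\Pic X^{[3]}$ one has $\theta_{v}(0,H,0)=H^{[3]}$ and $\theta_{v}(1,0,2)=-B$ (up to the usual sign), so that
\begin{equation*}
  H^{[3]}-\tfrac{\varepsilon(H)}{2}B=\theta_{v}\bigl(\tfrac{\varepsilon(H)}{2},\,H,\,\varepsilon(H)\bigr),\qquad q\bigl(H^{[3]}-\tfrac{\varepsilon(H)}{2}B\bigr)=H^{2}-\varepsilon(H)^{2}.
\end{equation*}
Because $\theta_{v}$ is an isometry between the Mukai and Beauville--Bogomolov forms, a ray is a wall of $\Mov X^{[3]}$ precisely when it is the $q$-orthogonal complement (a ray, since $\Pic X^{[3]}$ has rank two) of $\theta_{v}(w')$, where $w'$ is the $v^{\perp}$-projection of a class $w=(r,cH,s)$ satisfying the Pell-type conditions of \Cref{cor:baymacri}. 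Writing $H^{2}=2d$, a short computation turns ``$H^{[3]}-\tfrac{\varepsilon(H)}{2}B$ lies on this wall'' into the single numerical identity $\tfrac{\varepsilon(H)}{2}=\bigl\lvert 2dc/(2r+s)\bigr\rvert$. The whole problem is thereby reduced to exhibiting, for each admissible degree, an admissible $w$ for which this identity holds, and to checking via \Cref{cor:baymacri} that the resulting ray is a genuine wall.

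I would dispose of the family $H^{2}=a^{2}$, $a\in 2\NN$, in one stroke. Here $\varepsilon(H)=a=\sqrt{H^{2}}$ is maximal, so the displayed self-intersection vanishes and the preimage $(\tfrac a2,H,a)$ is a \emph{primitive isotropic} vector of $v^{\perp}$ — integral because $a$ is even, primitive because its $H$-coefficient is $1$. By Bayer--Macrì this is exactly the class of a (rational) Lagrangian fibration, hence a boundary wall of the movable cone, and the case is finished modulo the value $\varepsilon(H)=\sqrt{H^{2}}$. The remaining degrees I would treat individually, with the known submaximal values $\varepsilon(H)=1,2,\tfrac83$ for $H^{2}=2,6,8$ (the degree $H^{2}=4$ is already covered above, as $a=2$). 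In each of these the identity is realised by a \emph{natural} class: the spherical class $v(\oO_{X}(H))=(1,H,\tfrac{H^{2}}{2}+1)$ for $H^{2}=2,6$, and the isotropic class $v(I_{x}(H))=(1,H,\tfrac{H^{2}}{2})$ for $H^{2}=8$. Feeding $(r,c,s)=(1,1,\tfrac{H^{2}}{2}+1)$, respectively $(1,1,\tfrac{H^{2}}{2})$, into the identity reproduces $\tfrac{\varepsilon(H)}{2}$ on the nose, and a glance at the sublattice $\langle v,w\rangle$ confirms that one obtains a genuine wall (of Brill--Noether, flopping, or divisorial type, respectively).

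The main obstacle is that this argument is only as strong as its input, and the input is the hard part. The values $\varepsilon(H)$ rest on the existence or non-existence of the extremal singular curves computing them — a nodal member of $|H|$ when $H^{2}=2$, an irreducible curve in $|H|$ with an ordinary triple point when $H^{2}=6,8$, and, crucially, the \emph{absence} of such a triple-point section when $H^{2}=4$, which is precisely what pushes the Seshadri constant up to the maximal value $\sqrt{H^{2}}$ there. Establishing these curve-existence facts is genuine K3 geometry and must be imported from the literature; the lattice bookkeeping is then routine. A second, more conceptual difficulty is that the identity $\tfrac{\varepsilon(H)}{2}=\lvert 2dc/(2r+s)\rvert$ is matched by a \emph{different} class $w$ in each degree, so the agreement is verified case by case rather than produced by a single mechanism — which is also why I would expect a uniform statement valid in all degrees to be out of reach by this method.
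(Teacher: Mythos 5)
Your proposal is correct and rests on the same two pillars as the paper's own argument: the Bayer--Macr\`i description of the walls of $\Mov X^{[3]}$ (\Cref{cor:baymacri}) together with the values $\varepsilon(H)=1,2,2,\tfrac{8}{3},a$ imported from Bauer, Galati--Knutsen and Knutsen, matched degree by degree. The difference is one of bookkeeping rather than of route: the paper solves the Pell equations of \Cref{cor:baymacri} (by computer) and records the answer in \Cref{tab:MovableConesAndWalls}, disposing of the infinite family $H^2=a^2$ via \Cref{lem:perfectSquare}, whereas you exhibit explicit Mukai vectors --- the isotropic class $(\tfrac a2,H,a)$ for perfect squares, $v(\oO_X(H))=(1,H,\tfrac{H^2}{2}+1)$ for $H^2=2,6$, and $v(I_x(H))=(1,H,\tfrac{H^2}{2})$ for $H^2=8$ --- whose orthogonals realize $\varepsilon(H)/2$. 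Your numerics check out ($2dc/(2r+s)$ gives $\tfrac12,1,\tfrac43$ in degrees $2,6,8$, and degree $4$ is indeed subsumed by $a=2$), and your packaging has the merit of suggesting \emph{which} classes are responsible for the coincidence, something the paper only gestures at in the heuristic remark following \Cref{rem:seshadriequalwall}. Two small cautions: for $H^2=a^2$ the inference ``primitive isotropic $\Rightarrow$ Lagrangian fibration $\Rightarrow$ boundary of $\Mov$'' presupposes that no spherical or isotropic wall cuts the movable cone off before the boundary of the positive cone, which is precisely what \Cref{cor:baymacri} (first case) or \Cref{lem:perfectSquare} supplies, so you should lean on that rather than on the slogan; and for $H^2=6$ you still owe the check that the wall at $t=1$ is non-trivial (not a fake wall), which your proposed inspection of the rank-two sublattice $\langle v,s\rangle$ does settle and which the paper's table confirms. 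With those points made explicit, this is the paper's proof in a different, arguably more conceptual, dress.
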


With the help of the nested Hilbert scheme
\begin{equation*}
    X^{[r,r+1]} = \{(Z,Z')\in X^{[r]}\times X^{[r+1]}\,|\, Z\subset Z'\}
\end{equation*}
we will prove the following theorem:
\begin{theorem}[See \Cref{thm:vareps} and \Cref{thm:SeshadriNagataOnNested} for the precise statement]
    Let $X$ be a surface. Then 
    \begin{itemize}
        \item $\varepsilon(H)$ is determined by the nef cone of $X^{[1,2]}$.\footnote{Here we actually prove a stronger statement allowing also infimums over mutliple points which are then determined by the nef cone of $X^{[r,r+1]}$.}
        \item If $\Pic X = \ZZ H$ then $\varepsilon(H,r)$ is determined by $\Eff(X^{[r,r+1]})$
    \end{itemize}
\end{theorem}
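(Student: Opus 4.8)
The plan is to exploit one structural fact underlying both statements: the nested Hilbert scheme $X^{[r,r+1]}$, via the projection $\psi\colon X^{[r,r+1]}\to X^{[r]}$ remembering $Z$, is a family of blow-ups of $X$. Concretely $X^{[r,r+1]}$ is the blow-up of $X\times X^{[r]}$ along the universal subscheme, so the fibre of $\psi$ over a reduced $Z=\{x_1,\dots,x_r\}$ is $\mathrm{Bl}_{x_1,\dots,x_r}X$ with exceptional divisors $E_1,\dots,E_r$. The residual-point morphism $\rho\colon X^{[r,r+1]}\to X$, sending $(Z,Z')$ to the support of $Z'/Z$, restricts on each such fibre to the blow-down $\pi$, so $\rho^*H$ restricts to $\pi^*H$; and the non-reduced boundary $B$ restricts, up to the standard factor of two, to $\sum E_i$. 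Hence, after absorbing that constant, the tautological family $D_t=\rho^*H-t\,B$ restricts on the fibre over $Z$ to $\pi^*H-t\sum E_i$, whose nef threshold is precisely the fibrewise Seshadri constant $\varepsilon(H,x_1,\dots,x_r)$. Everything then reduces to comparing a threshold on the total space with the infimum of the fibrewise thresholds.

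For the first bullet ($r=1$) I would work on $X^{[1,2]}=\mathrm{Bl}_\Delta(X\times X)$, which fibres over $X$ by $\phi\colon(x,Z)\mapsto x$ with fibre $\phi^{-1}(x)=\mathrm{Bl}_xX$. One inclusion is immediate: if $D_t$ is nef then its restriction to every fibre is nef, giving $t\le\varepsilon(H,x)$ for all $x$, hence $t\le\varepsilon(H)$. For the converse I would show that $t\le\varepsilon(H)$ forces $D_t\cdot\Gamma\ge 0$ for every irreducible curve $\Gamma\subset X^{[1,2]}$: if $\Gamma$ lies in a fibre this is the fibrewise Seshadri inequality, while if $\Gamma$ dominates $X$ one controls $\rho^*H\cdot\Gamma$ and $B\cdot\Gamma$ using the positivity of $H$ and a specialisation to the worst fibre. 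Equivalently, since $\Pic X=\ZZ H$ makes $\Pic X^{[1,2]}$ of rank three, one can compute the nef cone of $\mathrm{Bl}_\Delta(X\times X)$ explicitly and read off that the ray $t\mapsto\rho^*H-tB$ meets its boundary exactly at $t=\varepsilon(H)$, the relevant wall being cut out by the family of Seshadri-extremal curves.

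For the second bullet the key observation is that $\varepsilon(H,r)=\inf_{x_1,\dots,x_r}\varepsilon(H,x_1,\dots,x_r)$ is an infimum over \emph{all} configurations, and such an infimum translates into an existence, hence effectivity, statement on the total space rather than a fibrewise nefness statement. Using $\Pic X=\ZZ H$, every competing curve has class $dH$, and a curve of class $dH$ meeting $Z'$ with the prescribed multiplicities sweeps out, as $(Z,Z')$ varies, an incidence divisor $D_{d,\mathbf m}$ on $X^{[r,r+1]}$ of numerical class $\rho^*(dH)-\bigl(\sum m_i\bigr)B$ modulo $\psi^*\Pic X^{[r]}$. I would then show that $\Eff(X^{[r,r+1]})$ is, in the $(\rho^*H,B)$-plane, generated below by these incidence classes, so that the ray $\rho^*H-tB$ leaves $\Eff(X^{[r,r+1]})$ exactly at slope $t=\varepsilon(H,r)$: effectivity of some $D_{d,\mathbf m}$ realises the ratio $\frac{dH^2}{\sum m_i}$ on a fibre, and conversely any boundary class of $\Eff$ restricts to an effective Seshadri-extremal curve on a general fibre, matching the infimum defining $\varepsilon(H,r)$.

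The main obstacle in both parts is the same: ruling out a class of smaller slope. In the first part this is the control of the horizontal curves $\Gamma$ dominating $X$, i.e.\ showing the nef wall is genuinely cut out by fibre curves and is not lowered by a multisection; lower semicontinuity of $x\mapsto\varepsilon(H,x)$ and an explicit description of the Mori cone of $\mathrm{Bl}_\Delta(X\times X)$ are the tools I expect to need. In the second part the difficulty is to identify the \emph{extremal} ray of $\Eff(X^{[r,r+1]})$, namely to prove that the incidence divisors $D_{d,\mathbf m}$, and not some unexpected effective divisor, generate the relevant boundary; here the hypothesis $\Pic X=\ZZ H$ is essential, since it collapses the bookkeeping onto the two coordinates $\rho^*H$ and $B$ and lets the wall classification feed directly into the optimisation defining $\varepsilon(H,r)$.
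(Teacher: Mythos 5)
Your structural starting point --- $X^{[r,r+1]}\to X^{[r]}$ is a family whose fibres over reduced $Z$ are the blowups $\Bl_Z X$, with $\mathrm{res}^*H$ and the exceptional class restricting to $H$ and $\sum E_i$ --- is exactly the paper's (\Cref{lem:ExceptionalDivOnNested}, \Cref{lem:fundamental}). But your formulation of the first bullet is false as stated, and the converse direction you propose cannot be carried out. The class $D_t=\mathrm{res}^*H-tB$ is \emph{never} nef on $X^{[r,r+1]}$ for $t>0$: take a curve contracted by $\mathrm{res}$, e.g.\ for $r=1$ the curve $\Gamma=\overline{\{(x,x+y):x\in D\setminus y\}}$ inside $\mathrm{res}^{-1}(y)\cong\Bl_y X$, where $D\subset X$ is a curve through $y$; then $\mathrm{res}^*H\cdot\Gamma=0$ while $B\cdot\Gamma=m(D,y)>0$, so $D_t\cdot\Gamma<0$. (Your dichotomy ``$\Gamma$ lies in a fibre of $\phi$ or dominates $X$'' also omits curves mapping onto curves in $X$, which is where these examples live.) This is why \Cref{thm:vareps} is \emph{not} the claim that the ray $t\mapsto H\diff-tE$ meets $\partial\Nef(X^{[1,2]})$ at $t=\varepsilon(H)$ --- that ray exits the nef cone at $t=0$ --- but the claim that $\varepsilon_{\textup{inf}}(H,r)=\sup\{t\,|\,H\diff-tE+A \textup{ is ample for some ample }A\in\Pic X^{[r]}\}$, i.e.\ a statement about \emph{relative} ampleness over $X^{[r]}$. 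The paper's argument is: $t<\varepsilon_{\textup{inf}}$ gives ampleness on every fibre (the extra $\PP^{n_i}$'s over non-reduced $Z$ are harmless since $-E$ is ample on them), hence $p_r$-ampleness, hence ampleness after adding a sufficiently positive $A$ from the base by \cite[Prop.~1.7.10]{lazarsfeldPositivity}; conversely a curve $C$ in a single fibre $\Bl_Z X$ with $(H-tE_Z)\cdot C<0$ kills $H\diff-tE+A$ for \emph{every} $A$ pulled back from $X^{[r]}$, since $A\cdot C=0$. The twist by $A$ is not a convenience; without it the statement you are trying to prove is wrong.

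For the second bullet your threshold is quantitatively wrong and the key input is missing. First, $\varepsilon(H,r)=\varepsilon_{\textup{sup}}(H,r)$ is the value at the \emph{very general} configuration (\Cref{prop:supSeshadri}), which is why it pairs with effectivity: an effective divisor on the total space restricts to an effective class on the very general fibre, not the worst one. Second, the ray $H\diff-\lambda E$ (again up to an ample twist from $X^{[r]}$) leaves the effective cone at $\lambda=\frac{H^2}{r\varepsilon(H,r)}$, not at $\varepsilon(H,r)$; these coincide only when Nagata's bound is attained. The point is that $E$ restricts on the very general fibre to $\sum_{i=1}^r E_i$ with \emph{equal} coefficients, and the boundary of $\Eff(\Bl_r X)$ along $H-\lambda\sum E_i$ sits at $\frac{H^2}{r\varepsilon}$, dual to the nef class $H-\varepsilon\sum E_i$ under the intersection form. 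Proving that this symmetric class is actually $\QQ$-effective is the crux: it needs the fact that a submaximal Nagata curve $dH-m\sum E_i-kE_i$ at very general points stays effective under every permutation of the $E_i$ (\Cref{cor:symmetricAction}, from \cite{szembergremarks}, leading to \Cref{rem:EdgeEffConeBlowUp}). Your proposal never explains why the extremal effective class in the $(H\diff,E)$-plane has equal coefficients, nor why your incidence loci are divisors with the class you assign them. Given the symmetric effective class on the very general fibre, the paper bypasses incidence divisors entirely: $p_{r*}\oO(H\diff-\lambda E)\neq 0$, so twisting by an ample $A$ on $X^{[r]}$ produces sections; conversely an effective $H\diff-\lambda E+A$ cannot contain the very general fibre, so it restricts to an effective class there, forcing $\lambda\le\frac{H^2}{r\varepsilon(H,r)}$.
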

The advantage of the approach above to Seshadri constants is, that for many spaces the cones of the usual Hilbert scheme are known, see e.g. \cite{bayer2014mmp} for K3 surfaces or \cite{BologneseNefConeHilbert} for the nef cone on any surface. However, for the nested Hilbert scheme the picture is much more unclear. Ryan--Yang \cite{ryanNefConeNested} recently computed the nef cone of $X^{[r,r+1]}$ for some rational surfaces for all $r>0$ and for very general K3 surfaces if $r\gg 0$.

On the other hand this directly allows to link the long standing Nagata conjecture to nested Hilbert schemes:
\begin{conjecture}[(Biran-)Nagata conjecture]
\label{conj:NagataBiran}
    Let $X$ be a surface and $H\in \Pic X$ an ample divisor. Then
    \begin{equation*}
        \varepsilon(H,r) = \sqrt{\frac{H.H}{r}}
    \end{equation*}
    for all $r\gg 0$.
\end{conjecture}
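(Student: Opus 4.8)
The plan is to run the conjecture through the dictionary furnished by the preceding theorem rather than to attack it directly. Under the hypothesis $\Pic X=\ZZ H$ (which already covers the motivating K3 case) the precise form \Cref{thm:SeshadriNagataOnNested} reads $\varepsilon(H,r)$ off the relevant boundary of $\Eff(X^{[r,r+1]})$: an effective divisor class on the nested scheme corresponds, after restriction to the fibres of $X^{[r,r+1]}\to X^{[r]}$, to a family of effective curve classes on the blow-ups $\mathrm{Bl}_Z X$, and the extremal such class governing the $(H^{[r,r+1]},B)$-slope computes $\varepsilon(H,r)$ as the infimum over configurations $Z\in X^{[r]}$ of the multi-point Seshadri constant $\varepsilon(H,x_1,\dots,x_r)$. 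So the conjecture becomes the assertion that, for $r\gg0$, this extremal ray sits exactly at slope $\sqrt{H.H/r}$. First I would separate this into two inequalities. The bound $\varepsilon(H,r)\le\sqrt{H.H/r}$ is the elementary half: a Riemann--Roch count produces, whenever $d^2\,(H.H)\ge r\,m^2$, a curve in $|dH|$ passing through $r$ very general points with multiplicity $m$, hence an effective divisor on $X^{[r,r+1]}$ of the matching class, and optimising $d,m$ gives the Nagata slope. This half needs no new input and corresponds to the effective cone containing everything up to the Nagata ray.

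All the content is therefore in the reverse inequality $\varepsilon(H,r)\ge\sqrt{H.H/r}$, which through the dictionary says precisely that no divisor class of smaller slope is pseudo-effective, equivalently that no curve in $|dH|$ acquires more total multiplicity at $r$ very general points than the naive parameter count allows. The mechanism I would exploit is exactly the fibration above: bounding $\Eff(X^{[r,r+1]})$ from the inside is the same as bounding, uniformly over the configuration $Z\in X^{[r]}$, the singular curves that Nagata's conjecture forbids, and the hypothesis $\Pic X=\ZZ H$ is what collapses the whole problem to the two parameters $(d,m)$ so that slope is well defined. Concretely I would try to produce a destabilising object or a numerical Bogomolov-type inequality on $X^{[r,r+1]}$ --- in the spirit of \Cref{cor:baymacri} and the Bridgeland-stability computations of \cite{bayer2014mmp} --- forcing any class below the Nagata ray out of the pseudo-effective cone.

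To turn this into an actual proof I would invoke the cases where $\Eff(X^{[r,r+1]})$ has already been computed: for very general K3 surfaces with $r\gg0$, and for the rational surfaces, Ryan--Yang \cite{ryanNefConeNested} determine the cone, and one then reads off that the governing extremal ray is the Nagata ray, so the conjecture holds verbatim for those $X$ --- this is the concrete payoff of the reformulation. The hard part, and I do not expect to get past it, is the general surface: there the effective cone is unknown, and the obstacle is not incidental but structural, since by the dictionary above determining that extremal ray is equivalent to ruling out the very curves whose existence is the subject of the conjecture. In other words the nested-Hilbert-scheme viewpoint does not shortcut the difficulty; it relocates it into the Mori and Bridgeland geometry of $X^{[r,r+1]}$, where I would expect genuine progress only for those $X$ whose nested Hilbert schemes are accessible to that machinery, leaving the general case exactly as open as the classical Nagata conjecture.
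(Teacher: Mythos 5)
The statement you were asked about is a conjecture: the paper states it without proof (it is the classical (Biran--)Nagata conjecture, open already for $X=\PP^2$ and $r\ge 10$ not a perfect square), so there is no proof in the paper to compare against. Your text is accordingly not a proof but a reformulation, and you say so yourself; that part is consistent with the role the conjecture plays in the paper, namely that \Cref{thm:SeshadriNagataOnNested} translates it, when $\Pic X=\ZZ H$, into a statement about the boundary of $\Eff(X^{[r,r+1]})$. You also correctly identify $\varepsilon(H,r)\le\sqrt{H.H/r}$ as the elementary half (it already follows from $(H-\lambda\sum E_i)^2\ge 0$ for a nef class, without any Riemann--Roch count).

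There is, however, one concretely wrong claim. You assert that because Ryan--Yang compute the cone of $X^{[r,r+1]}$ for rational surfaces and for very general K3 surfaces with $r\gg 0$, one ``reads off that the governing extremal ray is the Nagata ray, so the conjecture holds verbatim for those $X$.'' Ryan--Yang compute the \emph{nef} cone, and by \Cref{thm:vareps} the nef cone governs the \emph{infimum} Seshadri constant $\varepsilon_{\textup{inf}}(H,r)$, not $\varepsilon(H,r)=\varepsilon_{\textup{sup}}(H,r)$; the latter is governed by the \emph{effective} cone via \Cref{thm:SeshadriNagataOnNested}, and that cone is not known for these surfaces. The paper's own examples make the distinction explicit: from Ryan--Yang one gets $\varepsilon_{\textup{inf}}(H,r)=\frac{1}{r}$ for $\PP^2$ and $\varepsilon_{\textup{inf}}(H,r)=\frac{H^2}{r+1+H^2/2}$ for very general K3 surfaces, quantities of order $1/r$ rather than the Nagata slope $\sqrt{H^2/r}$. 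If your claim were correct it would settle the Nagata conjecture for $\PP^2$, which it does not. The remainder of your discussion --- that the hard inequality is equivalent to bounding $\Eff(X^{[r,r+1]})$ from the inside, and that the nested Hilbert scheme viewpoint relocates rather than removes the difficulty --- is accurate and is in the spirit of how the paper uses the conjecture.
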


Using the computations of the nef cone of Hilbert schemes from \cite{BologneseNefConeHilbert}, which was done via Bridgeland stability conditions, we are able to give the following bound on Seshadri constants for non-rational surfaces with $\Pic X = \ZZ H$.
\begin{theorem}
    Let $X$ be a surface with Picard group $\Pic X = \ZZ H$ with effective generator $H$ such that $K_X = aH$ with $a\ge 0$. Then
    \begin{equation*}
        \varepsilon(H) \ge \frac{2H^2n}{(a+1)H^2+n(n+1)+2}
    \end{equation*}
    for all integers $n$ such that $n^2+n \ge H^2$.  
\end{theorem}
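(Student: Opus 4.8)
The plan is to transfer the nef cone computation for $X^{[n]}$ from \cite{BologneseNefConeHilbert} onto a suitable blow-up of $X$, on which the single-point Seshadri constant is visible by definition. Fix a point $x\in X$ together with a curvilinear subscheme $\xi\subset X$ of length $n-1$ supported at $x$, and set
\[
 S_\xi=\{Z\in X^{[n]}\,|\,\xi\subset Z\}.
\]
Adding one further point to $\xi$ and resolving the collision at $x$ by its tangent direction identifies $S_\xi$ with the blow-up $\pi\colon\widetilde X\to X$ of $X$ at $x$, with exceptional curve $E$; this is the higher-length analogue of the description of the fibres of $X^{[1,2]}\to X$ underlying \Cref{thm:vareps}. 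First I would restrict the two tautological classes. Since $H^{[n]}$ is pulled back from the symmetric product and the $n-1$ points of $\xi$ stay fixed at $x$, only the moving point contributes, so $H^{[n]}|_{S_\xi}=\pi^\ast H$. For $B$ I would test against $E$ itself, which parametrises the punctual subschemes $Z\supset\xi$ with $\operatorname{Supp}Z=\{x\}$: it satisfies $H^{[n]}\cdot E=0$, while a local normal-bundle computation at $x$ gives $B\cdot E=-1$. As $E^2=-1$ on $\widetilde X$, this forces $B|_{S_\xi}=E$.

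With these restrictions in hand, the second step is to feed in the nef cone of $X^{[n]}$. The computation of \cite{BologneseNefConeHilbert}, carried out via tilt and Bridgeland stability, shows that in the range $n(n+1)\ge H^2$ the boundary of the nef cone is governed by the Gieseker wall for the destabilising inclusion $\oO_X(-H)\hookrightarrow I_Z$; here $\Pic X=\ZZ H$ forces every curve to lie in some $|kH|$, and the hypothesis on $n$ is exactly what makes $k=1$ the outermost wall. This yields that $H^{[n]}-\tau(n)\,B$ is nef with
\[
 \tau(n)=\frac{2H^2 n}{(a+1)H^2+n(n+1)+2},
\]
where the factors $(a+1)$ and $+2$ emerge by pushing $K_X=aH$ and $\chi(\oO_X)$ through Riemann--Roch inside the wall computation.

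The conclusion is then immediate. A nef class stays nef under restriction, so $\big(H^{[n]}-\tau(n)B\big)|_{S_\xi}=\pi^\ast H-\tau(n)E$ is nef on $\widetilde X$; by the very definition of the single-point Seshadri constant this gives $\varepsilon(H,x)\ge\tau(n)$. Since $x$ and the auxiliary $\xi$ were arbitrary, taking the infimum over $x\in X$ yields $\varepsilon(H)\ge\tau(n)$, which is the asserted bound for every admissible $n$. Observe that only nefness on the surface $S_\xi$ is used, whereas \cite{BologneseNefConeHilbert} supplies nefness on all of $X^{[n]}$; this gap is precisely why one obtains an inequality here rather than the exact value furnished by $X^{[1,2]}$ in \Cref{thm:vareps}.

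The step I expect to be the main obstacle is matching constants across the two computations. On the geometric side I must verify $B\cdot E=-1$ for every $n$, that is, control the restriction of the (half-)non-reduced divisor to the nested fibre $\widetilde X$; for $n=2$ this is the classical intersection on $X^{[2]}$, but for larger $n$ it requires understanding the punctual locus along $E$. On the stability side I must extract from \cite{BologneseNefConeHilbert} not only the value $\tau(n)$ but also the exact threshold $n(n+1)\ge H^2$ beyond which the $\oO_X(-H)$-wall is nef-defining; below it a higher twist $\oO_X(-kH)$ takes over and the closed form degrades. Reconciling these so that they land on the stated rational function, with the correct dependence on $a$ and $H^2$, is where the real work lies.
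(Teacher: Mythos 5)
There is a genuine gap, and it sits exactly at the point you flag as ``the main obstacle'': the formula you attribute to \cite{BologneseNefConeHilbert} is not the nef threshold of $X^{[n]}$. For $\Pic X=\ZZ H$ with $K_X=aH$, the cited result gives that $H^{[m]}-\lambda B$ is nef on $X^{[m]}$ for $\lambda\le \frac{2H^2}{(a+1)H^2+2m}$ (valid once $m\ge \frac{H^2}{2}+1$); it does not give the threshold $\tau(n)=\frac{2H^2n}{(a+1)H^2+n(n+1)+2}$ on $X^{[n]}$. That this is impossible can already be seen for a degree $2$ K3 surface with $n=3$: \Cref{tab:MovableConesAndWalls} shows that $\Nef(X^{[3]})=\Mov(X^{[3]})$ is bounded by $H^{[3]}-\frac{1}{2}B$, whereas your $\tau(3)=\frac{3}{4}$. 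Consequently, even granting all of your geometric claims about $S_\xi$, the restriction argument only yields $\varepsilon(H)\ge \frac{2H^2}{(a+1)H^2+2n}$, a bound which is decreasing in $n$ and, under the constraint $2n\ge H^2+2$, is always below $\frac{2}{a+2}$ --- far weaker than the stated inequality and nowhere near $\sqrt{H^2}$.

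The missing idea is where the factor of $n$ in the numerator comes from. The paper does not add one point to a curvilinear scheme inside $X^{[n]}$; it embeds $\Bl_x X$ into the nested Hilbert scheme $X^{[N,N+1]}$ with $N=\frac{n(n+1)}{2}$ via the fat point $\mathfrak{m}_x^n$ (\Cref{prop:BlowUpPower}). Since $\Bl_{\mathfrak{m}_x^n}X\cong \Bl_{\mathfrak{m}_x}X$ with the exceptional divisor of the former pulling back to $nE_x$, the class $E$ of the nested Hilbert scheme restricts to $nE_x$ rather than to $E_x$; this multiplies the threshold by $n$, and working on $X^{[N+1]}$ is what produces $2(N+1)=n(n+1)+2$ in the denominator as well as the hypothesis $n(n+1)\ge H^2$. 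Your construction also has a secondary problem: the locus of $Z\supset\xi$ in $X^{[n]}$ is the fiber of $X^{[n-1,n]}\to X^{[n-1]}$ over $\xi$, which by \Cref{rem:fiber_of_nested} is $\Bl_\xi X$ together with some projective spaces, and for a non-reduced curvilinear $\xi$ the blowup along $I_\xi$ is not $\Bl_x X$ (it is not even smooth), so the identification $S_\xi\cong\Bl_x X$ and the computation $B|_{S_\xi}=E$ would both need repair. But the decisive failure is the first one: with the correct nef cone of $X^{[n]}$ your argument cannot reach the claimed bound.
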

For example for very general K3 surfaces this gives bounds that are better than previously known bounds by Knutsen \cite{knutsenGeneralK3} in $50\%$ percent of the cases.
\subsection*{Acknowledgements}
This paper is mostly taken from the fourth chapter of my PhD-thesis \cite{thesis} which was written under the supervision of Frank Gounelas. \par
I would like to express my deep gratitude towards my advisor for the discussions on this topic and helpful comments on draft versions and towards Andreas Knutsen for his discussions during his visit in Göttingen. Moreover, I would like to thank thank Matthias Schütt for several comments on the draft.
\section{Definitions and Numerical Observations}
\label{sec:Observation}
We first start with the following well known definition, see e.g.\@ the survey paper \cite{BauerPrimer}: 
\begin{definition}
    Let $X$ be a surface, $H$ an ample divisor and $Z\subset X$ a $0$-dimensional subscheme of length $r$. Denote by $\Bl_Z X\to X$ the blowup along $Z$ and let $E_r$ be the corresponding exceptional divisor. Then we define
    \begin{equation*}
        \varepsilon(H, Z) = \sup \{\lambda>0\,|\, H-\lambda E_r \in \Pic(\Bl_Z X)_\RR \textup{ is ample}\}.
    \end{equation*}
\end{definition}
For $Z$ the disjoint union of points this recovers the usual definition of multi-point Seshadri constants. In this paper, we allow the infimum and supremum over more points:
\begin{definition}
    Let $X$ be a surface and $H$ an ample divisor. We then define
    \begin{equation*}
        \varepsilon_{\textup{inf}}(H, r) = \inf_{Z\in X^{[r]}} \varepsilon(H,Z)
    \end{equation*}
    and 
    \begin{equation*}
        \varepsilon_{\textup{sup}}(H, r) = \sup_{Z\in X^{[r]}} \varepsilon(H,Z).
    \end{equation*}
\end{definition}
\begin{remark}
    Clearly, $$\varepsilon_{\textup{inf}}(H, 1) = \varepsilon(H)$$ as all closed subschemes of dimension $0$ and length $1$ are closed points. Furthermore, $$\varepsilon_{\textup{sup}}(H, r) = \varepsilon(H,r),$$
    see \Cref{prop:supSeshadri}.
\end{remark}
\subsection*{Examples of Seshadri constants}
In this subsection we will give some known computations and also some conjectures related to Seshadri constants. We will mostly deal with K3 surfaces.
For $X$ a K3 surface over $\CC$ with $\Pic(X) = \ZZ H$ the following numbers are known:
\begin{itemize}
    \item $H.H = 2$ then $\varepsilon(H) = 1$, see e.g.\@ the survey paper \cite{BauerPrimer}.
    \item $H.H = 4$ then $\varepsilon(H) = 2$, see Bauer \cite{bauerQuarticSeshadri}.
    \item $H.H = 6$ then $\varepsilon(H) = 2$, see Galati--Knutsen \cite{knutsenSeshadri6and8}.
    \item $H.H = 8$ then $\varepsilon(H) = \frac{8}{3}$, see Galati--Knutsen \cite{knutsenSeshadri6and8}.
    \item $H.H = a^2$ a perfect square, then $\varepsilon(H) = a$, see Knutsen \cite{knutsenGeneralK3}.
\end{itemize}
Moreover Galati--Knutsen give some evidence in \cite{knutsenSeshadri6and8} that the following should hold as well:
\begin{itemize}
    \item $H.H = 14$ then $\varepsilon(H) = \frac{14}{4}$?
    \item $H.H = 24$ then $\varepsilon(H) = \frac{24}{5}$?
\end{itemize}
For very general Enriques surfaces Galati--Knutsen \cite{galati2022rationalEnriques} proved the following: For $H\in \Pic X$ ample and the Cossec function $$\phi(H) = \min\{E.H\,|\, E^2=0, E \textup{ effective and non-trivial}\}$$
we have
\begin{equation*}
    \varepsilon(H) = \phi(H).
\end{equation*}

Furthermore for arbitrary surfaces there is the following conjecture:
\begin{conjecture}[Farnik--Szemberg--Szpond--Tutaj-Gasinska \cite{szembergConjecture}]
\label{conj:szemberg}
    Let $X$ be a surface with Picard group $\ZZ H$ such that $H^2$ is not a perfect square. Let $(q,p)$ be a primitive solution of $p^2-H^2q^2 = 1$. Then
    \begin{equation*}
        \varepsilon(H,1) \ge \frac{q}{p}H^2,
    \end{equation*}
    where $\varepsilon(H,1) = \max_{x\in X} \varepsilon(H,x)$.
\end{conjecture}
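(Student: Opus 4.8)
The plan is to read the statement off the cone of the nested Hilbert scheme $X^{[1,2]}$ and then to isolate the arithmetic of the Pell equation. By the remark we have $\varepsilon(H,1) = \varepsilon_{\textup{sup}}(H,1)$, and since the single-point Seshadri constant attains its maximal value at a very general point $x \in X$, it suffices to bound $\varepsilon(H,x)$ from below for very general $x$. Writing $e = H^2$ and using that the fundamental solution satisfies $p^2 - e q^2 = 1$, I would phrase the desired inequality on the blow-up $\Bl_x X$: the integral class $D = pH - q e\, E$ has $D^2 = e(p^2 - e q^2) = e > 0$, and $\varepsilon(H,x) \ge \tfrac{q}{p} e$ holds exactly when $D$ is nef. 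By the theorem stated above this nef condition is precisely the statement that the associated class lies in the relevant face of $\Eff(X^{[1,2]})$, so the two formulations agree and I may work on the blow-up.

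By Nakai--Moishezon it remains to test $D$ against curves. For an irreducible curve $C \equiv dH$ through $x$ with multiplicity $m$, the strict transform is $\tilde C = dH - mE$ and $D.\tilde C = e(dp - mq)$. Hence $D$ is nef if and only if no irreducible curve through the very general point $x$ has slope $\tfrac{m}{d} > \tfrac{p}{q}$. I would then suppose such a curve exists, i.e.\@ $mq \ge dp + 1$. Because $x$ is very general, $C$ moves in a family whose singular point sweeps out $X$, so Xu's multiplicity inequality gives $d^2 e = C^2 \ge m(m-1)$, which rearranges to $0 < m - d\sqrt e < 1$; thus $m = \lceil d\sqrt e\rceil$ and $\sqrt e < \tfrac{m}{d}$. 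Combined with $\tfrac{m}{d} > \tfrac{p}{q}$ this exhibits $\tfrac{m}{d}$ as an upper approximation to $\sqrt e$ strictly sharper than the fundamental Pell convergent $\tfrac{p}{q}$, which one expects to forbid by the minimality of $(q,p)$.

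The main obstacle is precisely this final exclusion. Xu's inequality by itself is too weak: already for $e = 2$ the pair $(d,m) = (1,2)$ satisfies $m(m-1) \le d^2 e$ and $\tfrac{m}{d} = 2 > \tfrac{p}{q} = \tfrac{3}{2}$, yet the corresponding nodal member of $|H|$ does not pass through a general point, so a purely numerical descent on $p^2 - e q^2 = 1$ cannot see the geometry that rules such curves out. The route I would pursue is to translate the existence of a curve of slope $> \tfrac{p}{q}$ into the effectivity of an explicit class on $X^{[1,2]}$ and to contradict this through the description of $\Eff(X^{[1,2]})$ via Bridgeland stability: a destabilising object of such slope would violate the governing Bogomolov-type inequality, forcing the Pell class onto the boundary of the effective cone rather than into its interior. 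Carrying out this last step is the crux, and it is exactly what separates the present statement, a conjecture, from the theorems proved earlier in the paper.
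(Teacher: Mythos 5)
There is a genuine gap here, and it is one you have correctly diagnosed yourself: the statement you were given is \Cref{conj:szemberg}, an open conjecture of Farnik--Szemberg--Szpond--Tutaj-Gasinska. The paper does not prove it and does not claim to; it only offers numerical evidence, namely the corollary following \Cref{cor:baymacri} showing that the same Pell data $\frac{q}{p}H^2$ cuts out the boundary of $\Mov(X^{[3]})$, and the accompanying tables. So there is no proof in the paper to compare against, and any argument that actually closed the gap would be a new result rather than a reconstruction.

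As for the attempt itself: the reduction is sound as far as it goes. Setting $e=H^2$, the class $D=pH-qeE$ on $\Bl_x X$ has $D^2=e>0$, Nakai--Moishezon reduces nefness to excluding irreducible curves $C\equiv dH$ through the very general $x$ with $m/d>p/q$, and Xu's inequality $C^2\ge m(m-1)$ for curves moving with $x$ correctly forces $m=\lceil d\sqrt{e}\rceil$. But this is the standard reduction that everyone working on such bounds performs, and the crux --- showing that no such curve exists --- is exactly where the conjecture lives. Your own example ($e=2$, $(d,m)=(1,2)$ versus $(p,q)=(3,2)$) shows the numerics of the Pell equation together with Xu cannot finish the job: one must use geometry to see that the nodal member of $|H|$ does not acquire its node at a general point, and no such argument is supplied. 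The final paragraph, proposing to contradict effectivity on $X^{[1,2]}$ via a Bogomolov-type inequality for Bridgeland stability, is a plausible research direction (and resonates with the question posed at the end of the paper), but it is a program, not a proof; note also that via \Cref{thm:SeshadriNagataOnNested} the conjecture becomes a \emph{non}-effectivity statement on $X^{[1,2]}$ for all $\lambda>p/q$, which is the hard, Nagata-type direction. In short: the setup is fine, the conclusion is not reached, and the statement remains a conjecture.
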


\subsection*{Cones and walls of the Hilbert scheme}
In the following we want to explain the relation of Seshadri constants and Hilbert schemes. Therefore, we explain the wall-and-chamber decomposition of the movable cone of the Hilbert scheme of K3 surfaces, following \cite{bayer2014mmp}. The main result of loc.\@cit.\@ is the computation of the birational behaviour of any moduli space of sheaves on a K3 surface. This then leads to the following wall-and-chamber decomposition for a Hilbert scheme $X^{[n]}$ 
\begin{equation*}
    \Mov(X^{[n]}) = \bigcup_{i\colon X\dashrightarrow \Tilde{X}} i^*\Nef(\Tilde{X}),
\end{equation*}
where the union runs over all non-trivial birational $K$-trivial models $\Tilde{X}$, see \cite[Section 3.7]{debarre2018hyperk}.
To describe this decomposition, we need to introduce the lattice $$\ZZ\oplus \Pic(S)\oplus \ZZ =\colon \Halg(S)$$ with the intersection pairing $$(a,D,b)\cdot (c,D',d) = D.D'-ad-cb.$$ Then, for $n>1$ the Picard group $$\Pic(X^{[n]}) \cong (1,0,-n+1)^\perp \cong \Pic(X)\oplus \ZZ B,$$ where $B = B^{[r]} = (1,0,n-1)$. Via this identification and $v=(1,0,-n+1)$, the following theorem holds.
\begin{theorem}[Bayer--Macrì {\cite[Theorem 12.1, 12.3]{bayer2014mmp}}]
    The movable cone of $X^{[n]}$ is cut out in $\mathrm{Pos}(X^{[n]})\otimes \RR$ by
    \begin{itemize}
        \item $s^\perp\cap v^\perp$, where $s\cdot s = -2$
        \item $w^\perp \cap v^\perp$, where $w\cdot w = 0$ and $w\cdot v \in \{1,2\}$.
    \end{itemize}
    The walls in $\Mov(X^{[n]})$ are given by $v^\perp\cap a^\perp$ (when non-trivial) for all $a\in \Halg(X)$ with $a\cdot a \ge -2$ and $0\le a\cdot v \le v\cdot v/2$.
\end{theorem}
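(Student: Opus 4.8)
The strategy is to realize $X^{[n]}$ as a moduli space of Bridgeland-stable objects and then to transport the wall-and-chamber structure of the space of stability conditions onto the movable cone. The ideal sheaf $I_Z$ of a length-$n$ subscheme $Z$ has Mukai vector $v = (1,0,-n+1)$, and for a generic stability condition $\sigma$ in the Gieseker chamber (adjacent to the large-volume limit) one has an isomorphism $X^{[n]} \cong M_\sigma(v)$ with the moduli space of $\sigma$-stable objects of class $v$. Under the Mukai morphism the Mukai pairing on $v^\perp \subset \Halg(X)$ is identified, up to sign, with the Beauville--Bogomolov form on $\mathrm{NS}(X^{[n]})_\RR \cong v^\perp$, so that $\mathrm{Pos}(X^{[n]})$ becomes the positive cone for this form and lattice-theoretic conditions on $\Halg(X)$ translate directly into geometric conditions on divisors.

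The key input is the positivity lemma of Bayer--Macrì, which produces a piecewise-analytic map $\ell\colon \mathrm{Stab}^\dagger(X) \to \mathrm{NS}(M_\sigma(v))_\RR$, $\sigma\mapsto \ell_\sigma$, such that $\ell_\sigma$ is always nef and is ample exactly when $\sigma$ lies in the open chamber for $v$ containing it. First I would establish this nefness: by construction $\ell_\sigma\cdot C$ equals, up to a positive scalar, the imaginary part of a ratio of central charges $-Z_\sigma(\,\cdot\,)/Z_\sigma(v)$ attached to a family of objects over a curve $C\to M_\sigma(v)$, and non-negativity of this quantity for a family of $\sigma$-semistable objects of fixed phase is a formal consequence of the phase inequalities for the Harder--Narasimhan and Jordan--Hölder filtrations, with vanishing precisely along loci of $S$-equivalent objects. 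As $\sigma$ crosses a wall, $\ell_\sigma$ moves to a boundary face of the nef cone, and the induced contraction contracts exactly the curves whose objects become strictly semistable.

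Next I would classify the walls. Each wall $\mathcal{W}$ for $v$ is governed by a rank-two primitive hyperbolic sublattice $\mathcal{H}\subset \Halg(X)$ containing $v$, the destabilized objects being those whose Jordan--Hölder factors have classes in $\mathcal{H}$. Carrying out the case analysis on the minimal-norm classes of $\mathcal{H}$ yields the dichotomy: a spherical class $s$ with $s\cdot s = -2$ forces a Brill--Noether divisorial contraction, while an isotropic class $w$ with $w\cdot w = 0$ and $w\cdot v\in\{1,2\}$ forces a Hilbert--Chow or Li--Gieseker--Uhlenbeck divisorial contraction (dually, a Lagrangian fibration); these are precisely the walls $s^\perp\cap v^\perp$ and $w^\perp\cap v^\perp$ that bound $\Mov(X^{[n]})$. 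Every remaining class $a$ with $a\cdot a \ge -2$ and $0\le a\cdot v \le v\cdot v/2$ produces a flopping wall $v^\perp\cap a^\perp$, and these subdivide the interior of $\Mov(X^{[n]})$ into the nef cones of the various birational $K$-trivial models.

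The main obstacle I anticipate is twofold: proving the positivity lemma in the strong form needed (nefness together with the exact description of the contracted locus), and matching the numerical wall-data to the geometric type of the contraction, i.e.\@ showing that no predicted wall is ``fake'' and that no extra wall appears. For the latter I would invoke the result that every minimal model of $M_\sigma(v)$ is again a Bridgeland moduli space $M_{\sigma'}(v)$, together with the global Torelli theorem of Verbitsky and Markman's description of the monodromy group, which characterize $\Mov(X^{[n]})$ intrinsically through the lattice $v^\perp$ and its reflections in $(-2)$- and isotropic classes. Reconciling the Bridgeland wall computation with this lattice-theoretic picture, and in particular tracking the codimension of the exceptional loci to distinguish divisorial walls from flops, is where the real work lies.
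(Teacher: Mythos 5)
The paper offers no proof of this statement at all: it is quoted directly from Bayer--Macrì \cite[Theorems 12.1 and 12.3]{bayer2014mmp} as background, so there is nothing internal to compare your argument against. Your outline is a faithful summary of the architecture of the original proof in that reference (moduli interpretation of $X^{[n]}$, the positivity lemma, the classification of walls via rank-two hyperbolic sublattices, and the Torelli/monodromy input), but as written it is a roadmap rather than a proof --- the substantive steps you correctly identify as the obstacles (the strong form of the positivity lemma, the totally semistable/fake wall analysis, and matching numerical wall data to contraction type) are exactly the content of the cited theorems and would need the full machinery of \cite{bayer2014mmp} to carry out.
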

In particular, if $\Pic(X) = \ZZ H$ with $H.H = 2d$ and $H^{[n]} = (0,H,0)$, this yields:
\begin{corollary}[Bayer--Macrì {\cite[Proposition 13.1]{bayer2014mmp}}]
\label{cor:baymacri}
    \begin{itemize}
        \item If $d = \frac{k^2}{h^2}(n-1)$, then $$\Mov(X^{[n]}) = \langle H^{[n]}, H^{[n]}-\frac{k}{h}B\rangle.$$
        \item Otherwise if the equation $(n-1)x^2-dy^2 = 1$ has a solution $(x_1,y_1)$ with $x_1>0$ minimal and $y_1>0$, then $$\Mov(X^{[n]}) = \langle H^{[n]}, H^{[n]}-\frac{dy_1}{x_1(n-1)}B\rangle.$$
        \item Otherwise let $(x_1,y_1)$ be a solution of $x^2-d(n-1)y^2=1$ with $x_1+1$ divisible by $n-1$ and $y_1/x_1>0$ minimal. Then $$\Mov(X^{[n]}) = \langle H^{[n]}, H^{[n]}-\frac{dy_1}{x_1}B\rangle.$$
    \end{itemize}
\end{corollary}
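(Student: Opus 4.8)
The plan is to read off the three cases from the preceding Bayer--Macrì theorem by making the rank-two lattice $\Pic(X^{[n]}) = v^\perp$ completely explicit. First I would compute the intersection form in the basis $H^{[n]} = (0,H,0)$, $B = (1,0,n-1)$: a direct computation with the pairing on $\Halg(X)$ gives $(H^{[n]})^2 = 2d$, $B^2 = -2(n-1)$ and $H^{[n]}\cdot B = 0$, so the form is diagonal and the component of the positive cone containing $H^{[n]}$ is $\{xH^{[n]}+yB : 2dx^2 - 2(n-1)y^2>0,\ x>0\}$, whose isotropic boundary is reached at $\beta = \sqrt{d/(n-1)}$ along the ray $H^{[n]}-\beta B$. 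Since $H^{[n]}$ is the class pulled back from the symmetric product and contracted by the Hilbert--Chow morphism, it is one boundary ray of $\Mov(X^{[n]})$; I would then write the unknown second boundary as $H^{[n]} - \beta B$ with $0<\beta\le\sqrt{d/(n-1)}$ and aim to compute $\beta$.

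For the second step I would translate the theorem's wall data into this coordinate. For $a = (a_0,mH,a_2)\in\Halg(X)$ one finds $a\cdot a = 2dm^2 - 2a_0a_2$, $a\cdot v = a_0(n-1)-a_2$, and that the wall $a^\perp\cap v^\perp$ meets the ray $H^{[n]}-\beta B$ exactly when $\beta = -2dm/(a_0(n-1)+a_2)$. By the theorem the nontrivial boundary of $\Mov$ is cut out only by spherical classes $s$ with $s^2=-2$ or isotropic classes $w$ with $w^2=0$ and $w\cdot v\in\{1,2\}$; matching these to the classification of divisorial contractions and Lagrangian fibrations, the candidates are a Lagrangian fibration (isotropic $w$ with $w\cdot v = 0$, lying on the boundary of the positive cone), a Brill--Noether contraction (spherical $s$ with $s\cdot v = 0$), and a Li--Gieseker--Uhlenbeck contraction (isotropic $w$ with $w\cdot v = 2$). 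The value $\beta$ is then determined by the \emph{first} such wall met as $\beta$ increases from $0$.

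I would treat the three regimes in the order stated. If $d/(n-1) = (k/h)^2$ with $\gcd(k,h)=1$, then the boundary ray $H^{[n]}-(k/h)B$ of the positive cone is rational and isotropic, no spherical or isotropic cutting class lies strictly inside, and $\Mov$ fills the whole half--cone, so $\beta = k/h$. Otherwise $\sqrt{d/(n-1)}$ is irrational and the boundary is a genuine divisorial wall. Imposing $s^2 = -2$ and $s\cdot v = 0$ on $a = (a_0,mH,a_2)$ forces $a_2 = (n-1)a_0$ and $(n-1)a_0^2 - dm^2 = 1$; the minimal solution $(x_1,y_1)$ of $(n-1)x^2 - dy^2 = 1$ then gives $s = (x_1,-y_1H,(n-1)x_1)$ and $\beta = dy_1/(x_1(n-1))$. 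If that equation is unsolvable, I would instead impose $w^2 = 0$ and $w\cdot v = 2$, which after eliminating $m$ and $a_2$ becomes the Pell equation $x^2 - d(n-1)y^2 = 1$ with $x = a_0(n-1)-1$; the requirement that $a_0$ be an integer is exactly $n-1\mid x_1+1$, and one reads off $\beta = dy_1/x_1$.

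The hard part will be the birational and number--theoretic bookkeeping hidden in the phrase ``first such wall''. Concretely I must show that the minimal Pell solution produces the \emph{innermost} divisorial wall, so that the interior flopping walls (which also appear among the $a^\perp$ with $a^2\ge -2$) never terminate $\Mov$ earlier; that whenever the Brill--Noether equation of the second case is solvable its wall lies closer to $H^{[n]}$ than any Li--Gieseker--Uhlenbeck wall, justifying the stated priority of the cases; and that the three regimes are mutually exclusive and exhaustive. Establishing this priority — rather than the purely formal lattice computation — is where the genuine content of the Bayer--Macrì classification of walls is needed.
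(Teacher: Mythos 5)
The paper does not actually prove this corollary: it is quoted verbatim as \cite[Proposition 13.1]{bayer2014mmp}, so there is no in-paper argument to compare yours against. Judged on its own terms, your derivation from the preceding theorem is on the right track and your computational core is correct: in the basis $H^{[n]}=(0,H,0)$, $B=(1,0,n-1)$ of $v^\perp$ the form is indeed $\mathrm{diag}(2d,-2(n-1))$, the wall of $a=(a_0,mH,a_2)$ meets the ray $H^{[n]}-\beta B$ at $\beta=-2dm/(a_0(n-1)+a_2)$, and imposing $s^2=-2,\ s\cdot v=0$ (resp.\ $w^2=0,\ w\cdot v=2$) produces exactly the Pell equations $(n-1)x^2-dy^2=1$ (resp.\ $x^2-d(n-1)y^2=1$ with $n-1\mid x_1+1$) and the stated coefficients $dy_1/(x_1(n-1))$ and $dy_1/x_1$. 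Note that you silently and correctly repair an imprecision in the paper's transcription of the theorem, which omits the constraint $s\cdot v=0$ on the spherical classes cutting out $\Mov$; without that constraint your second step would not go through.

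The items you defer to the end are where the content lies, but several of them are cheaper than you suggest, and you should close them rather than leave them as a to-do list. First, the flopping walls cannot ``terminate $\Mov$ earlier'': by the theorem's own dichotomy they are walls \emph{in} $\Mov(X^{[n]})$, not walls \emph{of} it, so that worry is vacuous. Second, within each Pell family the minimal solution does give the innermost wall, since $(y/x)^2=((n-1)-1/x^2)/d$ is increasing in $x$, so minimal $x_1$ minimizes $\beta$. Third, mutual exclusivity of case one: writing $d=k^2t$, $n-1=h^2t$ with $\gcd(k,h)=1$ forces $t((hx)^2-(ky)^2)=1$ in the second equation and makes $d(n-1)$ a perfect square in the third, so neither has a solution with $y>0$. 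Fourth, the priority of the second case over the third: if $(x_1,y_1)$ solves $(n-1)x^2-dy^2=1$, then $(X,Y)=(2(n-1)x_1^2-1,\,2x_1y_1)$ solves $x^2-d(n-1)y^2=1$ with $n-1\mid X+1$, and $dY/X>dy_1/((n-1)x_1)$, so the Brill--Noether wall always lies strictly inside the Li--Gieseker--Uhlenbeck wall. What genuinely remains is to rule out interior walls coming from isotropic classes with $w\cdot v=1$ other than $(0,0,-1)$; this is the one place where you really do need the finer analysis of \cite[Section 13]{bayer2014mmp} rather than a two-line Pell manipulation, and your proposal as written does not address it.
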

\begin{remark}
    Solutions to this can of course be obtained by hand. However, the program \cite{python} can compute the wall-and-chamber decomposition of any smooth moduli space of coherent sheaves on K3 surfaces of Picard rank $1$.
\end{remark}
\subsection*{Numerical Observations for K3 surfaces}
For $X^{[3]}$  we can compute the following divisors inducing the walls:
\renewcommand{\arraystretch}{2}
    \begin{longtable}{c|c|c}
        $H.H$ & Wall divisors & Movable cone\\\hline
        $2$ & & $H^{[3]}-\frac{1}{2}B$\\
        $4$ & $H^{[3]}-\frac{4}{5}B$& $H^{[3]}-1B$\\
        $6$ & $H^{[3]}-B$& $H^{[3]}-\frac{6}{5}B$\\
        $8$& & $H^{[3]}-\frac{4}{3}B$\\
        $10$ & $H^{[3]}-\frac{20}{13}B$, $H^{[3]}-\frac{10}{7}B$& $H^{[3]}-\frac{30}{19}B$\\
        $12$ & & $H^{[3]}-\frac{12}{7}B$\\
        $14$ & & $H^{[3]}-\frac{7}{4}B$\\
        $16$ & &$H^{[3]}-2B$ \\       
        $24$ & & $H^{[3]}-\frac{12}{5}B$\\
        $a^2>4$ & & $H^{[3]}- \frac{a}{2}B$\\
        \caption{Computation of the movable cone and its chambers for $X^{[3]}$. The case $a^2> 4$ is proven in \Cref{lem:perfectSquare} below.}
    \label{tab:MovableConesAndWalls}
    \end{longtable}
\begin{lemma}
\label{lem:perfectSquare}
    Let $X$ be a surface with $\Pic(X) = \ZZ H$ and $H.H = a^2 > 4$ a perfect square. Then $$\Mov(X^{[3]}) = \Nef(X^{[3]}) = \langle H^{[3]} ,H^{[3]}-\frac{a}{2}B \rangle.$$
    In particular, there do not exist any flops.
\end{lemma}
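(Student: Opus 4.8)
The plan is to read off $\Mov(X^{[3]})$ directly from \Cref{cor:baymacri} and then to verify, via the wall criterion of Bayer--Macr\`i, that no wall meets the interior of this cone. Throughout I use that $X$ is a K3 surface (implicit in the ambient setup, and in any case needed to invoke \Cref{cor:baymacri}); in particular the lattice $\Pic X$ is even, so $a^2 = H^2$ is even and $a$ is even. The relevant numerics are $n = 3$, hence $n-1 = 2$ and $v = (1,0,-2)$ with $v\cdot v = 4$, while $d = H^2/2 = a^2/2$. Since $a$ is even, $d = 2(a/2)^2 = \tfrac{(a/2)^2}{1^2}(n-1)$, so the first bullet of \Cref{cor:baymacri} applies with $k = a/2$, $h = 1$ and yields $\Mov(X^{[3]}) = \langle H^{[3]}, H^{[3]} - \tfrac{a}{2}B\rangle$ at once. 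It then remains to prove $\Nef(X^{[3]}) = \Mov(X^{[3]})$, equivalently that $\Mov(X^{[3]})$ contains no interior wall, so that it is a single chamber and there are no flops.

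To carry this out I would set up coordinates in $v^\perp \cong \Pic(X^{[3]})_\RR$ using the basis $H^{[3]} = (0,H,0)$ and $B = (1,0,2)$, for which $(H^{[3]})^2 = a^2$, $B^2 = -4$ and $H^{[3]}\cdot B = 0$. Thus the ray $D_t := H^{[3]} - tB$ has $D_t^2 = a^2 - 4t^2$, so the interior of the movable cone is exactly $\{D_t : 0 < t < a/2\}$. By Bayer--Macr\`i every wall of $\Mov(X^{[3]})$ is $v^\perp\cap\alpha^\perp$ for some $\alpha = (p, qH, s)\in\Halg(X)$ with $\alpha\cdot\alpha \ge -2$ and $0 \le \alpha\cdot v \le v\cdot v/2 = 2$. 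Writing $u := \alpha\cdot v = 2p - s \in \{0,1,2\}$ and computing $D_t\cdot\alpha = qa^2 + t(4p-u)$, the wall meets the ray $D_t$ at
\begin{equation*}
t = -\frac{qa^2}{4p-u}.
\end{equation*}
The cases $q = 0$ or $4p - u = 0$ give only $t = 0$ or a trivial intersection (i.e.\@ a boundary ray, not a genuine wall), so I may assume $q \neq 0$ and $4p-u \neq 0$.

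The crux is to show that $0 < t < a/2$ is incompatible with $\alpha\cdot\alpha \ge -2$. The plan is to turn the inequalities on $t$ into an arithmetic bound: $t > 0$ forces $q$ and $4p-u$ to have opposite signs, whence $t = |q|a^2/|4p-u|$, and $t < a/2$ then reads $|4p-u| > 2|q|\,a$. As all quantities are integers I may write $|4p-u| = 2|q|\,a + \delta$ with an integer $\delta \ge 1$. Substituting $s = 2p - u$ into $\alpha\cdot\alpha = q^2a^2 - 2ps$ gives $\alpha\cdot\alpha = q^2a^2 - (4p^2 - 2pu)$, and using $4p^2 - 2pu = \tfrac14\big((4p-u)^2 - u^2\big)$ the expression collapses to
\begin{equation*}
\alpha\cdot\alpha = q^2a^2 - \frac{(2|q|a+\delta)^2 - u^2}{4} = -|q|\,a\,\delta - \frac{\delta^2}{4} + \frac{u^2}{4}.
\end{equation*}
With $|q| \ge 1$, $\delta \ge 1$, $u \le 2$ and $a \ge 4$ the right-hand side is at most $-4 - \tfrac14 + 1 = -\tfrac{13}{4} < -2$, contradicting $\alpha\cdot\alpha \ge -2$. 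Hence no wall meets the interior and $\Mov(X^{[3]}) = \Nef(X^{[3]})$.

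I expect the only delicate point to be the algebraic identity producing the clean form of $\alpha\cdot\alpha$ above, together with a careful bookkeeping of signs in the two cases $4p-u>0$ and $4p-u<0$ (both of which lead to the same formula); everything else is a direct application of \Cref{cor:baymacri} and the wall criterion. It is worth emphasizing that the final estimate uses $a \ge 4$ essentially, which is exactly the content of the hypothesis $a^2 > 4$ together with the parity of the K3 lattice; this is consistent with the fact that the excluded degree $H^2 = 4$ (that is $a = 2$) genuinely carries a wall, as recorded in \Cref{tab:MovableConesAndWalls}.
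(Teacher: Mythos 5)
Your proof is correct, and for the ``no flops'' part it takes a genuinely different route from the paper. The paper also reads off $\Mov(X^{[3]})$ from \Cref{cor:baymacri}, but then invokes \cite[Theorem 5.7]{bayer2014mmp} to reduce the existence of a flop to two Diophantine conditions --- a spherical class $s$ with $s^2=-2$ and $s\cdot v\in\{1,2\}$, or a decomposition $v=a+b$ into positive classes with $a\cdot v, b\cdot v>0$ --- and rules each out by a short parity/perfect-square argument (e.g.\@ $s^2=-2$ forces $D^2=4r(r-1)-2$ or $D^2=(2r-2)(2r+1)$, neither of which can be an even perfect square $>4$). You instead work directly with the numerical wall criterion from Theorems 12.1 and 12.3 as quoted in the paper ($\alpha\cdot\alpha\ge -2$, $0\le\alpha\cdot v\le 2$), parametrize the ray $D_t=H^{[3]}-tB$, and show via the identity $\alpha\cdot\alpha=-|q|a\delta-\tfrac{\delta^2}{4}+\tfrac{u^2}{4}\le-\tfrac{13}{4}$ that no wall hyperplane can meet the open segment $0<t<a/2$. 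I checked the lattice computations ($B^2=-4$, $D_t\cdot\alpha=qa^2+t(4p-u)$, the completion of the square) and they are right; your dismissal of the degenerate cases $q=0$ and $4p-u=0$ is terse but sound, since integrality forces $p=u=s=0$ there. What your approach buys is uniformity (all potential walls, not just flopping ones, are excluded in one estimate, so you never need the finer classification of wall types from Theorem 5.7) and self-containedness relative to what the paper actually states; what the paper's approach buys is brevity and the structural information that the obstruction is precisely the non-existence of certain spherical and isotropic classes. Both arguments use the evenness of the K3 lattice in an essential way to get $a\ge 4$ from $a^2>4$, and both are consistent with the genuine wall appearing at $H^2=4$ in \Cref{tab:MovableConesAndWalls}.
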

\begin{proof}
    The movable cone has this form by \Cref{cor:baymacri}. Thus, we only need to show that no flops exist. By \cite[Theorem 5.7]{bayer2014mmp} this could only happen in two cases: Let $v = (1,0,-2)$.
    \begin{itemize}
        \item There exists an $s\in\Halg(X)$ with $s^2 = -2$ and $s.v \in \{1,2\}$.
        \item There exist $a,b\in \Halg(X)$ that are positive, i.e. $a^2, b^2 \ge 0$, $a.v > 0$ and $b.v > 0$, such that $a+b = v$.
    \end{itemize}
    In this first case with $s.v = 1$ we have that $s^2 = -2$ implies $D^2 = 4r(r-1)-2$. But this is clearly not divisible by $4$, a contradiction to $D^2$ being a perfect square that is even.\par
    If $s.v = 2$ a computation shows that $s^2=-2$ implies $(2r-2)(2r+1)= D^2.$ Again, this contradicts $D^2 > 4$ being an even perfect square.\par
    In the second case, we have that $a.b > 0$ and thus $$4 = v^2 = (a+b)^2 = a^2+2a.b+b^2.$$
    This would mean that without loss of generality $a^2 = 0$ and $a.v \in \{1,2\}.$ However, with $a = (r,D,c)$ this would force $D^2 \in \{2r(2r-1),2r(2r-2)\}$, which is impossible if $D^2>4$ is an even perfect square.
\end{proof}
\noindent We will come to the main observation \ref{obs:mainobservation}, which numerically suggests a connection with the Seshadri constants
\begin{observation}
\label{rem:seshadriequalwall}
     From the calculations above we get that in any case in which the Seshadri number $\varepsilon$ of a very general K3 surface is known, the divisor $H^{[3]}-\frac{\varepsilon}{2}B$ lies on a wall of the movable cone of Hilbert cube $X^{[3]}$. Furthermore, the suggestions for the Seshadri constant of Galati--Knutsen \cite{knutsenSeshadri6and8} in degrees $14,24$ line up with these observations as well.\par
\end{observation}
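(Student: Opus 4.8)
The plan is to prove the observation by a direct case-by-case verification, since it ranges only over the finitely many degrees $H^2\in\{2,4,6,8\}$ together with the infinite family of even perfect squares. For each degree I would first read off (or recompute from \Cref{cor:baymacri}) the extremal ray and the interior walls of $\Mov(X^{[3]})$, and then substitute the known value of $\varepsilon(H)$ into $H^{[3]}-\tfrac{\varepsilon}{2}B$ and check that the result coincides with one of these rays. Since all the cone data has already been assembled in \Cref{tab:MovableConesAndWalls}, the verification reduces to comparing two short lists of rational slopes.

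First I would recall that $\varepsilon(H)=1,2,2,\tfrac{8}{3}$ for $H^2=2,4,6,8$ and that $\varepsilon(H)=a$ when $H^2=a^2$. For $n=3$ we have $n-1=2$, so the relevant Pell-type equations of \Cref{cor:baymacri} are $2x^2-dy^2=1$ and $x^2-2dy^2=1$ with $d=H^2/2$; solving these produces precisely the slopes in the table. Comparing then yields: for $H^2=2$, $\tfrac{\varepsilon}{2}=\tfrac12$ is the extremal ray; for $H^2=4$, $\tfrac{\varepsilon}{2}=1$ is the extremal ray (with $H^{[3]}-\tfrac45 B$ the interior wall); for $H^2=6$, $\tfrac{\varepsilon}{2}=1$ is the \emph{interior} wall $H^{[3]}-B$; and for $H^2=8$, $\tfrac{\varepsilon}{2}=\tfrac43$ is the extremal ray. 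The conjectural values in degrees $14$ and $24$ give $\tfrac74$ and $\tfrac{12}{5}$, matching the extremal rays in those rows.

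The infinite family $H^2=a^2>4$ cannot be settled by a finite check, so there I would instead invoke \Cref{lem:perfectSquare}, which already shows $\Mov(X^{[3]})=\Nef(X^{[3]})=\langle H^{[3]},H^{[3]}-\tfrac{a}{2}B\rangle$ with no flops. Its unique wall is the extremal ray $H^{[3]}-\tfrac{a}{2}B$, and since $\varepsilon(H)=a$ by Knutsen this is exactly $H^{[3]}-\tfrac{\varepsilon}{2}B$, closing out the family.

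The main obstacle is conceptual rather than computational. The observation asserts an exact numerical coincidence between Seshadri constants and walls for which no uniform mechanism is offered at this stage, so the argument is genuinely a verification whose content lies in the precision of the match across every known and conjectured degree. The only step that requires an argument beyond arithmetic is the perfect-square family, and that work is already isolated in \Cref{lem:perfectSquare}; an explanation of \emph{why} $\tfrac{\varepsilon}{2}$ should turn up as a wall is deferred to the nested-Hilbert-scheme results later in the paper.
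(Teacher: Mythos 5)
Your proposal matches the paper's own justification: the observation is established exactly by the case-by-case comparison of the known (and conjectured) Seshadri constants against the wall and extremal-ray slopes assembled in \Cref{tab:MovableConesAndWalls} via \Cref{cor:baymacri}, with the infinite family $H^2=a^2>4$ handled by \Cref{lem:perfectSquare}. Your arithmetic in each degree agrees with the table, so there is nothing to add.
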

\begin{remark}[A possible explanation]
In the cases above, the Seshadri constant is computed by rational curves and elliptic curves. We put two remarks on their gonality:
    \begin{itemize}
        \item Let $C$ be a rational curve with two or three nodes. Then there exists a morphism $C\to \PP^1$ of degree $3$.
        \item Let $C$ be an elliptic curve with nodes and triple points. Choose one triple point $x\in C$. Any such curve (-singularity) is trigonal in the following sense: There exists a partial desingularization $\Tilde{C}\to C$ which is an isomorphism around the singularity $x\in C$ together with a morphism $\Tilde{C}\to \PP^1$ of degree $3$.
    \end{itemize}
    This morphism thus induces $\Tilde{C}\subset X\times \PP^1$ and thus a morphism $f_C\colon \PP^1\to X^{[3]}$. This satisfies $f_C(\PP^1).H^{[r]} = C.H$ and $f_C(\PP^1).B = 2+p_a(\Tilde{C})$.
    Thus, for K3 surfaces we get the following curves computing the Seshadri constant:
    \begin{itemize}
        \item Degree $2$: $C\in |H|$ is a rational curve with two nodes, and
        \begin{equation*}
            (H^{[3]}-\frac{1}{2}B).f_C(\PP^1) = 0.
        \end{equation*}
        \item Degree $4$: $C\in |H|$ is a rational curve with three nodes, and
        \begin{equation*}
            (H^{[3]}-\frac{4}{5}B).f_C(\PP^1) = 0.
        \end{equation*}
        \item Degree $6$: $C\in |H|$ is an elliptic curve with a triple point, and
        \begin{equation*}
            (H^{[3]}-B).f_C(\PP^1) = 0.
        \end{equation*}
        \item Degree $8$: $C\in |H|$ is an elliptic curve with a triple point and a node. Then,
        \begin{equation*}
            (H^{[3]}-\frac{4}{3}B).f_C(\PP^1) = 0.
        \end{equation*}
    \end{itemize}
\end{remark}

Focusing on the Farnik--Szemberg--Szpond--Tutaj-Gasinska \Cref{conj:szemberg} it turns out, these numbers also (partially) show up in the movable cone of $X^{[3]}$:
\begin{corollary}
    Let $X$ be a K3 surface with Picard rank $\Pic X = \ZZ H$. Then the movable cone of $X^{[3]}$ has the following form $H^{[3]}-\frac{\alpha_m}{2} B$
    \begin{itemize}
        \item If $H^2$ is a perfect square $\alpha_m = \sqrt{H^2}$
        \item If $2p^2-\frac{H^2}{2}q^2 = 1$ has a solution, the solution $(p,q)$ with $p$ minimal satisfies
        \begin{equation*}
            \alpha_m = \frac{q}{p}H^2.
        \end{equation*}
        \item Otherwise, let $(p,q)$ be a solution of $p^2-H^2q^2 = 1$ such that $p+1$ is divisible by $2$ and with $q/p$ minimal, then
        \begin{equation*}
            \alpha_m = \frac{q}{p} H^2.
        \end{equation*}
    \end{itemize}
\end{corollary}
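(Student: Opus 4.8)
The plan is to read this corollary off directly from the Bayer--Macrì trichotomy in \Cref{cor:baymacri}, specialized to the single value $n=3$. Setting $n-1=2$ and, as in the setup of that corollary, $d = H^2/2$, the Picard rank of $X^{[3]}$ is two, so the movable cone is always of the form $\Mov(X^{[3]}) = \langle H^{[3]}, H^{[3]} - cB\rangle$ for a unique rational $c>0$; writing $c = \alpha_m/2$, it then suffices to identify $\alpha_m$ in each of the three branches and to check that those branches reorganize into the three cases of the statement. Since the three branches of \Cref{cor:baymacri} are already a disjoint ``otherwise'' trichotomy, the only reorganization needed is to see that the first branch is equivalent to $H^2$ being a perfect square.

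For that first branch, the condition $d = \frac{k^2}{h^2}(n-1)$ becomes $H^2 = 2d = (2k/h)^2$. Because $H^2$ is even on a K3 surface, this holds for some integers $k,h$ precisely when $H^2$ is a perfect square, and in lowest terms $k/h = \sqrt{H^2}/2$. As the boundary ray in this branch is $H^{[3]} - \frac{k}{h}B$, one reads off $\alpha_m = 2\cdot\frac{k}{h} = \sqrt{H^2}$. This recovers, and is compatible with, \Cref{lem:perfectSquare}, where the absence of further flopping walls was already established for $H^2 = a^2 > 4$.

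For the remaining two branches I would simply substitute $n-1=2$ and $d=H^2/2$ into the two Pell-type equations. The second branch's equation $(n-1)x^2 - dy^2 = 1$ becomes $2p^2 - \frac{H^2}{2}q^2 = 1$, and the third's $x^2 - d(n-1)y^2 = 1$ becomes $p^2 - H^2 q^2 = 1$, with the divisibility constraint ``$x_1+1$ divisible by $n-1$'' turning into ``$p+1$ divisible by $2$.'' Taking the indicated minimal solution in each case and inserting it into the wall-divisor coefficient---namely $\frac{dy_1}{x_1(n-1)}$ in the second branch and $\frac{dy_1}{x_1}$ in the third---then yields $\alpha_m$ after multiplication by $2$. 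The third branch gives exactly $\alpha_m = \frac{q}{p}H^2$, which is the conceptual payoff: it reproduces the value appearing in the Farnik--Szemberg--Szpond--Tutaj-Gasinska \Cref{conj:szemberg}.

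The one place demanding genuine care---and the step I expect to be the main obstacle---is the bookkeeping of the two factors of $2$, one from $n-1=2$ and one from $d = H^2/2$, when passing from the $B$-coefficient to $\alpha_m$. In the third branch these conspire to give the clean $\frac{q}{p}H^2$, but in the second branch the identical computation produces the coefficient $\frac{dq}{2p} = \frac{q}{p}\cdot\frac{H^2}{4}$, so that $\alpha_m = \frac{q}{p}\cdot\frac{H^2}{2}$; I would therefore cross-check each branch against the explicit entries of \Cref{tab:MovableConesAndWalls}, in particular against $H^2 = 2$ (where the second branch applies with $(p,q)=(1,1)$ and must return $\alpha_m = 1$, forcing the coefficient $\tfrac12$), in order to pin down the constants in the second case unambiguously before recording the final normalization.
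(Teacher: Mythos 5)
Your proposal is correct and is essentially the paper's own proof, which consists of the single line that the statement follows directly from \Cref{cor:baymacri}; specializing that trichotomy to $n=3$, $d=H^2/2$ is exactly what is intended there. Your extra bookkeeping in the second branch is in fact warranted: the Bayer--Macr\`i coefficient $\frac{dy_1}{x_1(n-1)}$ becomes $\frac{H^2q}{4p}$, so $\alpha_m = \frac{q}{p}\cdot\frac{H^2}{2}$ rather than the $\frac{q}{p}H^2$ printed in the second bullet, and this is confirmed by the entries $H^2=2$ (where $(p,q)=(1,1)$ gives the wall $H^{[3]}-\frac{1}{2}B$, i.e.\ $\alpha_m=1$) and $H^2=14$ (where $(p,q)=(2,1)$ gives $H^{[3]}-\frac{7}{4}B$, i.e.\ $\alpha_m=\frac{7}{2}$) of \Cref{tab:MovableConesAndWalls}. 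So the second case of the corollary as stated appears to carry a spurious factor of $2$, and your cross-check against the table pins down the correct normalization.
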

\begin{proof}
    This follows directly from \Cref{cor:baymacri}.
\end{proof}

\subsection*{Numerical observations for Enriques surfaces}
For Enriques surfaces $X$ that are unnodal, i.e.\@ contain no curves of negative self-intersection, Nuer \cite{nuerEnriques} showed that for any ample $H\in \Pic X$ we have that
\begin{equation*}
    H^{[n]} - \frac{\phi(H)}{n} B
\end{equation*}
is nef but not ample as a divisor on $X^{[n]}$. Thus for $X$ an unnodal very general Enriques surface we have that
    \begin{equation*}
        H^{[n]} - \frac{\varepsilon(H)}{n} B
    \end{equation*}
    is on the boundary of $\Nef X^{[n]}$, lining up with the observations for K3 surfaces.

\section{Background on nested Hilbert schemes}
We start with a recollection of well known facts on nested Hilbert schemes. We follow Lehn's lecture notes \cite{lehnLecturesHilbert} and Ellingsrud--Strømme \cite{ellingsrudNested}.
Let $X$ be a surface and $X^{[r]}$ be the Hilbert scheme of $r$ points. Then by construction there is the ideal sheaf $I_r$ of the universal family on $X^{[r]}\times X$ such that the blowup $\mathrm{Bl}_{I_r} (X^{[r]}\times X)$ parameterizes subschemes of length $r+1$, i.e.\@ there is a morphism
\begin{equation*}
    \mathrm{Bl}_{I_r} (X^{[r]}\times X) \to X^{[r+1]},
\end{equation*}
see e.g.\@ \cite[Section 2]{lehnLecturesHilbert}. Outside the exceptional divisor it is given by sending $(x,y)\in X^{[r]}\times X$ to $x\cup y$.
The induced morphism 
\begin{equation*}
    \mathrm{Bl}_{I_r} (X^{[r]}\times X) \to X^{[r]}\times X^{[r+1]}
\end{equation*}
is a closed immersion and the image is
\begin{equation*}
    X^{[r,r+1]} = \{(z',z)\in X^{[r]}\times X^{[r+1]}\,|\, z'\subset z\},
\end{equation*}
see \cite[Prop. 2.2.]{ellingsrudNested}.
This space admits a residue morphism $\mathrm{res}\colon X^{[r,r+1]}\to X$ sending $(z',z) \mapsto z\setminus z'$. Then the following diagram
\begin{center}
\begin{tikzcd}
                                      & {X^{[r]}}   &                                                        \\
{X^{[r,r+1]}} \arrow[ru, "p_{r}"] \arrow[rd, "p_{r+1}"] \arrow[rdd, "\mathrm{res}"'] \arrow[rr, "\cong"]&             & {\mathrm{Bl}\,(X^{[r]}\times X)} \arrow[lu] \arrow[ld]\arrow[ldd] \\
                                      & {X^{[r+1]}} & \\
                                      & X &
\end{tikzcd}    
\end{center}
commutes. 

\begin{notation}
    We denote by $E\in \Pic(X^{[r,r+1]})$ the exceptional divisor coming from the blowup description and for any $H\in \Pic X$ we denote $\textup{res}^* H = H\diff$.\par
    If no confusion is possible, we further identify any divisor in $\Pic (X^{[r]})$ and $\Pic (X^{[r+1]})$ with its pullback in $\Pic X^{[r,r+1]}$, e.g.\@
    if $H\in \Pic X$ we denote the divisors $p^*_rH^{[r]}$ and $p^*_{r+1}H^{[r+1]}$ by $H^{[r]}$ and $H^{[r+1]}$ respectively. We proceed analogously with the divisors $B^{[r]}, B^{[r+1]}$. 
\end{notation}
In \cite[Section 3.2 and 4.2]{ryanNefConeNested} it is shown\footnote{Note, that in mentioned paper the notation is slightly different: The divisor $B^{[r]}$ differs from their notion by a factor of $2$. We use the notation as in \cite[Section 13]{bayer2014mmp}.} that in the Picard group we have
    \begin{align*}
        H^{[r+1]} &= H\diff + H^{[r]}\\
        B^{[r+1]} &= E + B^{[r]}.
    \end{align*}
The fibers of $X^{[r,r+1]}\to X^{[r]}\times X$ are as follows, see e.g.\@ \cite{ellingsrudNested, ryanNefConeNested}. For any given $(\xi, P)\in X^{[r]}\times X$, the preimage is the projective space $\PP(I_\xi(P))$. \par
For any zero-dimensional closed subscheme $\xi\subset X$ we want to find an embedding of the blowup 
$p\colon \Bl_\xi X\to X$ into $X^{[r,r+1]}$. In the following the \emph{exceptional divisor} denotes the closed subscheme coming from the universal property of blowing up, i.e. the closed subscheme corresponding to the ideal sheaf $p^{-1} I_\xi \cdot \oO_{\Bl_\xi X}$, which is a line bundle.

\begin{lemma}
\label{lem:ExceptionalDivOnNested}
    Let $\xi \in X^{[r]}$ be a point. Then there is a natural inclusion of $\Bl_\xi X \hookrightarrow X^{[r,r+1]}$ contained in the fiber of $X^{[r,r+1]}\to X^{[r]}$ over $\xi$ and the composition $\Bl_\xi X \hookrightarrow X^{[r,r+1]}\to X$ is precisely the blowup along $\xi$. Moreover the exceptional divisor of the morphism $X^{[r,r+1]}\to X^{[r]}\times X$ restricts to the exceptional divisor of the blowup $\Bl_\xi X\to X$.
\end{lemma}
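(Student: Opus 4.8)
The plan is to construct the embedding explicitly using the universal properties, then verify the claims about the composition and the exceptional divisor by a local computation. The starting point is the fiber description: over a point $\xi \in X^{[r]}$, the fiber of $X^{[r,r+1]} \to X^{[r]} \times X$ over $(\xi, P)$ is $\PP(I_\xi(P))$, and via the blowup description $X^{[r,r+1]} \cong \Bl_{I_r}(X^{[r]} \times X)$, the fiber of $X^{[r,r+1]} \to X^{[r]}$ over $\xi$ is exactly the blowup of $\{\xi\} \times X \cong X$ along the subscheme cut out by $I_\xi$, which is $\Bl_\xi X$. This gives the inclusion $\Bl_\xi X \hookrightarrow X^{[r,r+1]}$ landing in the fiber over $\xi$ essentially for free, just by restricting the blowup morphism to the slice $\{\xi\} \times X$.

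**Next I would** identify the composition $\Bl_\xi X \hookrightarrow X^{[r,r+1]} \to X$ with the blowup morphism. The morphism $X^{[r,r+1]} \to X$ in the diagram is the residue map $\mathrm{res}$, but what we actually want is the other projection $\{\xi\} \times X \to X$; I would work instead with the projection of $X^{[r]} \times X$ to its second factor, which is the base of the blowup $\Bl_{I_r}(X^{[r]} \times X) \to X^{[r]} \times X$ composed with $\mathrm{pr}_2$. Restricting to the slice over $\xi$, this composite is precisely $\Bl_\xi X \to \{\xi\}\times X \cong X$, the structural blowup morphism. The point to check carefully is that the scheme structure on the center agrees: the ideal $I_r$ restricted to $\{\xi\}\times X$ is $I_\xi$, which holds because $I_r$ is the ideal of the universal family and the universal family restricts over $\xi$ to $\xi \subset X$ itself.

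**For the final claim** about exceptional divisors, I would argue that the exceptional divisor $E$ of the blowup $X^{[r,r+1]} = \Bl_{I_r}(X^{[r]}\times X) \to X^{[r]} \times X$ restricts, on the fiber $\Bl_\xi X$, to the exceptional divisor of $\Bl_\xi X \to X$. Since blowing up commutes with flat base change and with restriction to a fiber when the center behaves well, the key is that the total transform ideal $p^{-1}I_r \cdot \oO$ pulls back along the slice inclusion to $p^{-1}I_\xi \cdot \oO_{\Bl_\xi X}$. This is exactly the ideal defining the exceptional divisor of $\Bl_\xi X \to X$ as stated in the paragraph preceding the lemma. I would make this precise via the local description: on an affine chart where $I_\xi = (f_1,\ldots,f_k)$, the blowup chart introduces coordinates $u_i$ with $f_i = u_i f_1$ (say), and the exceptional divisor is $(f_1)$ in that chart, which is manifestly the restriction of the corresponding global exceptional equation.

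\textbf{Main obstacle.} The subtle point is that restriction to a fiber does not in general commute with blowing up—one needs $\xi$ to be a point of $X^{[r]}$ such that $\{\xi\}\times X$ meets the center $I_r$ in the expected way (i.e.\@ the scheme-theoretic fiber of the center is exactly $\xi$) and that no excess or embedded components appear. I would verify this by using that $I_r$ is the ideal sheaf of the universal family, which is flat over $X^{[r]}$; flatness guarantees that the fiber of the center over $\xi$ is $\xi \subset X$ with no jumping, so the restriction of the blowup is the blowup of the restriction. Controlling this flatness/base-change compatibility—rather than the elementary local chart computation—is the real content of the argument.
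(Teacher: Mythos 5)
Your construction follows the same route as the paper up to a point: identify $X^{[r,r+1]}$ with $\Bl_{I_Z}(X^{[r]}\times X)$, restrict the universal ideal to the slice $X_\xi=\{\xi\}\times X$, and observe that the inverse image ideal $i^{-1}I_Z\cdot\oO_{X_\xi}$ is exactly $I_\xi$. Where the paper then invokes the universal property of blowing up (Hartshorne, Prop.~II.7.14 and Cor.~II.7.15) to realize $\Bl_\xi X$ as the \emph{strict transform} of the slice --- a closed subscheme of $X^{[r,r+1]}$ lying over $\xi$, on which the inverse image of $I_Z$ cuts out the expected exceptional divisor --- you instead assert that the scheme-theoretic fiber of $X^{[r,r+1]}\to X^{[r]}$ over $\xi$ \emph{is} $\Bl_\xi X$, and in your closing paragraph you justify this by claiming that flatness of the universal family over $X^{[r]}$ forces the restriction of the blowup to be the blowup of the restriction.

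That justification is incorrect, and so is the claimed equality. Flatness of the center over the base does not make blowing up commute with passage to a fiber; the scheme-theoretic fiber is the \emph{total} transform of the slice, which contains the strict transform $\Bl_\xi X$ together with extra components whenever $\xi$ is non-reduced. Concretely, the fiber of $X^{[r,r+1]}\to X^{[r]}\times X$ over $(\xi,P)$ is $\PP(I_\xi(P))$; if for instance $I_{\xi,P}=\mm_P^2$, this is a $\PP^2$, whereas the exceptional divisor of $\Bl_\xi X\to X$ over $P$ is only a $\PP^1$. This is exactly the phenomenon recorded in Remark~\ref{rem:fiber_of_nested}, where the fiber is described as $\Bl_\xi X\cup\PP^{n_0}\cup\cdots\cup\PP^{n_i}$. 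Since the lemma only claims an inclusion \emph{contained in} the fiber, the statement itself survives, but the embedding cannot be obtained ``for free'' by restricting the blowup morphism to the slice: it must come from the universal property/strict transform, for which your identification of the restricted ideal with $I_\xi$ is precisely the needed input. Once $\Bl_\xi X$ is recognized as the strict transform, your discussion of the exceptional divisor is fine and is exactly the content of Cor.~II.7.15.
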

\begin{proof}
    By the discussion above $X^{[r,r+1]} = \Bl_{I_Z} (X^{[r]}\times X)$, where $Z$ is the universal family and $I_Z$ the ideal sheaf. Let $p\colon X^{[r]}\times X\to X^{[r]}$ be the projection and $X_\xi = p^{-1}(\xi)\xrightarrow{i} X^{[r]}\times X$ the fiber over $\xi$. By definition of the universal family $i^{-1}I_Z\cdot \oO_{X_\xi} = I_\xi$ holds and the universal property of blowing up yields the closed immersion $\Bl_{\xi} X\to X^{[r,r+1]}$ with the desired properties, see e.g.\@ \cite[Prop. II.7.14 and Cor. II.7.15]{HartshorneBook}.
\end{proof}
\begin{remark}
\label{rem:fiber_of_nested}
To conclude, the fiber of $X^{[r,r+1]}\to X^{[r]}$ over a point $\xi \in X^{[r]}$ is the union 
\begin{equation*}
    \Bl_\xi X \cup \PP^{n_0} \cup\ldots \cup \PP^{n_i}
\end{equation*}
and the projective spaces meet $\Bl_\xi X$ in the exceptional divisor. Moreover for $\xi$ a union of disjoint points, the fiber is just the usual blowup along $\xi$.
\end{remark}

\section{Cones of Nested Hilbert schemes}
\label{sec:ConesOfNested}
In this section we will show that the Seshadri constants are computed by either the nef cone or the effective cone of the nested Hilbert schemes. \par
For this section let $X$ be a surface.
We start by linking ampleness on the blow ups of the underlying surface with ampleness on the nested Hilbert scheme: 
\begin{lemma}
\label{lem:fundamental}
    Let $X$ be a surface, $p_r\colon X^{[r,r+1]} \to X^{[r]}$ be the canonical projection and let $\lambda > 0$. Then for any $Z\in X^{[r]}$ the divisor $H\diff - \lambda E$ restricted to the fiber $p_r^{-1}(Z)$ is ample if and only if $H-\lambda E_Z$ is ample on the blow up $\Bl_Z X$, where $E_Z$ is the exceptional divisor.
\end{lemma}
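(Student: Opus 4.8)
The plan is to reduce the statement to a computation on the fiber $p_r^{-1}(Z)$ and then match the restriction of $H^{\triangle} - \lambda E$ with the divisor $H - \lambda E_Z$ on $\Bl_Z X$. Recall from \Cref{rem:fiber_of_nested} that the fiber $p_r^{-1}(Z)$ is the union $\Bl_Z X \cup \PP^{n_0} \cup \cdots \cup \PP^{n_i}$, where the projective spaces meet $\Bl_Z X$ along the exceptional divisor, and that for $Z$ a disjoint union of reduced points the fiber is exactly $\Bl_Z X$. First I would identify the restrictions of the two relevant divisor classes to $\Bl_Z X \subset p_r^{-1}(Z)$. By construction $\mathrm{res}$ restricted to this copy of $\Bl_Z X$ is the blowup map $\Bl_Z X \to X$ (\Cref{lem:ExceptionalDivOnNested}), so $H^{\triangle} = \mathrm{res}^* H$ restricts to the pullback of $H$; and by the last sentence of \Cref{lem:ExceptionalDivOnNested} the exceptional divisor $E$ of $X^{[r,r+1]} \to X^{[r]} \times X$ restricts to the exceptional divisor $E_Z$ of the blowup. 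Hence $(H^{\triangle} - \lambda E)|_{\Bl_Z X} = H - \lambda E_Z$ as classes on $\Bl_Z X$.

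The next step is to upgrade this identification of classes into the desired equivalence of ampleness. The nontrivial direction is to show that ampleness of $H - \lambda E_Z$ on $\Bl_Z X$ implies ampleness of $(H^{\triangle} - \lambda E)|_{p_r^{-1}(Z)}$ on the \emph{whole} fiber, not just on the component $\Bl_Z X$. A clean way to proceed is to invoke a Nakai--Moishezon or Seshadri-type ampleness criterion on the reducible scheme $p_r^{-1}(Z)$: a divisor on a projective scheme is ample if and only if its restriction to each irreducible component is ample (equivalently, positive against every curve). Therefore I would check positivity of $(H^{\triangle} - \lambda E)$ separately against curves in $\Bl_Z X$ — which is exactly ampleness of $H - \lambda E_Z$, handled above — and against curves in each projective space $\PP^{n_j}$. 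The converse direction is immediate, since restricting an ample divisor on $p_r^{-1}(Z)$ to the closed subscheme $\Bl_Z X$ yields an ample divisor.

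The main obstacle, and the step deserving the most care, is the analysis on the extra components $\PP^{n_j}$. I expect that on each $\PP^{n_j}$ the class $H^{\triangle} = \mathrm{res}^* H$ is trivial — because $\mathrm{res}$ contracts these projective-space components to a point of $X$ (they parametrize length-one enlargements supported at a fixed point of the nonreduced locus of $Z$) — so the restriction reduces to $-\lambda E|_{\PP^{n_j}}$. The content is then to verify that $-E$ restricts to a positive multiple of the hyperplane class $\oO_{\PP^{n_j}}(1)$, so that $-\lambda E|_{\PP^{n_j}}$ is ample precisely because $\lambda > 0$. This is the expected behaviour of the exceptional divisor of a blowup along the fibers $\PP(I_\xi(P))$ of $X^{[r,r+1]} \to X^{[r]} \times X$, whose relative tautological bundle restricts to $\oO(1)$ on each fiber, making $-E$ relatively ample; concretely one identifies $\oO(E)|_{\PP^{n_j}}$ with $\oO_{\PP^{n_j}}(-1)$. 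Once this local positivity on the projective-space components is established — uniformly in $\lambda > 0$ — the two directions combine via the componentwise ampleness criterion to give the stated equivalence.
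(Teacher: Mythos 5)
Your proposal is correct and follows essentially the same route as the paper: decompose the fiber via \Cref{rem:fiber_of_nested}, restrict to $\Bl_Z X$ using \Cref{lem:ExceptionalDivOnNested} to get $H-\lambda E_Z$, observe that on the projective-space components the class reduces to $-\lambda E$, which is ample for $\lambda>0$, and conclude by the componentwise ampleness criterion (the paper cites \cite[Prop 1.2.16]{lazarsfeldPositivity}). Your extra remark identifying $\oO(E)|_{\PP^{n_j}}$ with $\oO_{\PP^{n_j}}(-1)$ just makes explicit a step the paper leaves implicit.
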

\begin{proof}
    Let $p_r\colon X^{[r,r+1]} \to X^{[r]}$ be the canonical projection and let $\lambda > 0$. For any $Z\in X^{[r]}$ the fiber $p_r^{-1}(Z)$ is the union of some projective spaces $\PP^{n_i}$ and the blowup $\Bl_Z X$ by \Cref{rem:fiber_of_nested}. By 
    \Cref{lem:ExceptionalDivOnNested} the restriction satisfies
    \begin{equation*}
        (H\diff - \lambda E)|_{\Bl_Z X} = H-\lambda E_Z,
    \end{equation*}
    where $E_Z$ is the exceptional divisor of the blowup. Moreover, the restriction to the projective spaces $(H\diff - \lambda E)|_{\PP^{n_i}} = -\lambda E|_{\PP^{n_i}}$ is ample as $\lambda >0$. Thus, by \cite[Prop 1.2.16]{lazarsfeldPositivity} any line bundle as in the theorem is ample on the fiber if and only if it is ample on $\Bl_Z X$.
\end{proof}

\begin{proposition}
\label{prop:supSeshadri}
    For a surface $X$ and an ample line bundle $H$ we have
    \begin{equation*}
        \varepsilon(H,r) = \varepsilon_{\textup{sup}}(H, r).
    \end{equation*}
\end{proposition}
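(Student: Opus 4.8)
The plan is to prove the two inequalities separately. The inequality $\varepsilon(H,r) \le \varepsilon_{\textup{sup}}(H,r)$ is immediate: by definition $\varepsilon(H,r)$ is the supremum of $\varepsilon(H,x_1,\ldots,x_r)$ over configurations of $r$ distinct points, and for a reduced $Z=\{x_1,\ldots,x_r\}\in X^{[r]}$ the blowup definition of $\varepsilon(H,Z)$ recovers exactly this classical multi-point Seshadri constant. Since reduced subschemes form a subset of $X^{[r]}$, taking the supremum over all of $X^{[r]}$ can only enlarge the value, giving $\varepsilon(H,r) = \sup_{Z \textup{ reduced}} \varepsilon(H,Z) \le \sup_{Z\in X^{[r]}}\varepsilon(H,Z) = \varepsilon_{\textup{sup}}(H,r)$.

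The substance is the reverse inequality, for which I would show that $Z\mapsto \varepsilon(H,Z)$ is lower semicontinuous on $X^{[r]}$ and that the reduced locus is dense. The crucial tool is \Cref{lem:fundamental}: working on the nested Hilbert scheme with its projection $p_r\colon X^{[r,r+1]}\to X^{[r]}$, ampleness of $H\diff-\lambda E$ restricted to the fiber $p_r^{-1}(Z)$ is equivalent to ampleness of $H-\lambda E_Z$ on $\Bl_Z X$, so that $\varepsilon(H,Z)=\sup\{\lambda>0 : (H\diff-\lambda E)|_{p_r^{-1}(Z)} \textup{ is ample}\}$. This reformulation is precisely what makes the family argument go through uniformly over reduced and non-reduced $Z$ alike.

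Concretely, I would fix a point $Z_0\in X^{[r]}$ and a rational $\lambda$ with $0<\lambda<\varepsilon(H,Z_0)$. By \Cref{lem:fundamental} the class $H\diff-\lambda E$ is ample on $p_r^{-1}(Z_0)$; clearing denominators to obtain an honest line bundle and invoking the openness of ampleness in the proper family $p_r$ (as in \cite{lazarsfeldPositivity}) produces a neighbourhood $U$ of $Z_0$ on whose fibers $H\diff-\lambda E$ remains ample. Applying \Cref{lem:fundamental} again yields $\lambda<\varepsilon(H,Z)$ for every $Z\in U$. Since $X^{[r]}$ is irreducible with the locus of $r$ distinct points dense (Fogarty), $U$ contains reduced subschemes $Z$, and for each such $Z$ one has $\varepsilon(H,Z)\le\varepsilon(H,r)$; hence $\lambda<\varepsilon(H,r)$. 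Letting $\lambda\uparrow\varepsilon(H,Z_0)$ gives $\varepsilon(H,Z_0)\le\varepsilon(H,r)$, and taking the supremum over $Z_0$ yields $\varepsilon_{\textup{sup}}(H,r)\le\varepsilon(H,r)$, which combined with the first inequality proves the claim.

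The main obstacle I anticipate is conceptual rather than computational: over a non-reduced $Z$ the fiber $p_r^{-1}(Z)$ is not simply a blowup of $X$ but carries extra components $\PP^{n_i}$ (\Cref{rem:fiber_of_nested}), so one cannot run the openness argument on a naive relative blowup of $X$ itself. \Cref{lem:fundamental} resolves exactly this point, since on each extra component the restriction equals the ample class $-\lambda E|_{\PP^{n_i}}$ and so does not affect the ampleness threshold. The only technical subtlety is that the Seshadri threshold is a priori real, which I would sidestep by approximating $\varepsilon(H,Z_0)$ from below through rational $\lambda$ before applying the family version of ampleness; this is harmless precisely because the threshold is defined as a supremum.
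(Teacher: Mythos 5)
Your proposal is correct and follows essentially the same route as the paper: the easy inequality from reduced subschemes being a subset of $X^{[r]}$, and the reverse via \Cref{lem:fundamental} together with openness of ampleness in the proper family $p_r\colon X^{[r,r+1]}\to X^{[r]}$ and the density of the reduced locus. The paper's own proof is just a terser version of this argument (leaving the trivial inequality and the rational-approximation step implicit).
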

\begin{proof}
    By the previous \Cref{lem:fundamental} for any $\lambda < \varepsilon_{\textup{sup}}(H, r)$ the line bundle $H\diff -\lambda E$ is ample on at least one fiber. Therefore openness of ampleness in families \cite[Prop 1.2.17]{lazarsfeldPositivity} shows that it is ample for the  general $Z\in X^{[r]}$. 
\end{proof}
\subsection*{The nef cone of the nested Hilbert scheme}
The nef cone of the nested Hilbert scheme is directly be linked to the infimum Seshadri constant: Intuitively, the fibers of the morphism $X^{[r,r+1]}\to X^{[r]}$ parameterizes the surface blown up at $r$ points and therefore, ampleness on the nested Hilbert scheme is linked to the ampleness of the fibers as follows:
\begin{theorem}
\label{thm:vareps}
    Let $X$ be a surface and $X^{[r,r+1]}$ the nested Hilbert scheme for some $r\in \NN_{\ge 1}$. Then 
    \begin{equation*}
        \sup_{\lambda}\{\lambda>0 \,|\, H\diff-\lambda E + A\textup{ is ample for some ample }A\in \Pic(X^{[r]})\} = \varepsilon_{\textup{inf}}(H, r).
    \end{equation*}    
\end{theorem}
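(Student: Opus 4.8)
Write $\lambda_{\max}$ for the supremum on the left-hand side. The plan is to prove the two inequalities $\lambda_{\max}\le\varepsilon_{\textup{inf}}(H,r)$ and $\lambda_{\max}\ge\varepsilon_{\textup{inf}}(H,r)$ separately, in each case passing between the fibers of $p_r$ and the blow-ups $\Bl_Z X$ through \Cref{lem:fundamental}. Throughout I work with $\RR$-divisors inside the relevant open ample (resp.\ relatively ample) cones, so that all statements make sense for real $\lambda$.

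For the inequality $\lambda_{\max}\le\varepsilon_{\textup{inf}}(H,r)$, suppose $H\diff-\lambda E+A$ is ample for some ample $A\in\Pic(X^{[r]})$, where by our convention $A$ denotes $p_r^*A$. Restricting an ample class to the closed fiber $p_r^{-1}(Z)$ keeps it ample, and since $A$ is pulled back from the base it restricts trivially there; hence $(H\diff-\lambda E)|_{p_r^{-1}(Z)}$ is ample. By \Cref{lem:fundamental} this means $H-\lambda E_Z$ is ample on $\Bl_Z X$, i.e.\ $\lambda<\varepsilon(H,Z)$. As $Z\in X^{[r]}$ was an arbitrary (closed) point, $\lambda\le\inf_Z\varepsilon(H,Z)=\varepsilon_{\textup{inf}}(H,r)$, and taking the supremum over admissible $\lambda$ gives $\lambda_{\max}\le\varepsilon_{\textup{inf}}(H,r)$.

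The reverse inequality carries the real content. Fix $\lambda<\varepsilon_{\textup{inf}}(H,r)$, so $\lambda<\varepsilon(H,Z)$ for every $Z$; by \Cref{lem:fundamental} the divisor $H\diff-\lambda E$ is then ample on every fiber of $p_r$ over a closed point. I would upgrade this fibrewise ampleness to relative ampleness of $H\diff-\lambda E$ over $X^{[r]}$. The delicate point is that $p_r$ is generally not flat, since by \Cref{rem:fiber_of_nested} its fibers are unions $\Bl_Z X\cup\PP^{n_0}\cup\dots\cup\PP^{n_i}$ whose dimensions jump; I therefore invoke the criterion for relative ampleness that holds for an arbitrary proper morphism over a Noetherian base \cite[Theorem 1.7.8]{lazarsfeldPositivity}, which tests ampleness fibrewise. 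Because $X^{[r]}$ is of finite type over $\CC$, hence Jacobson, the open locus on which $H\diff-\lambda E$ is fibrewise ample contains every closed point and is consequently all of $X^{[r]}$, so verifying the condition on closed fibers suffices.

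Once $H\diff-\lambda E$ is $p_r$-ample, I fix any ample $A_0\in\Pic(X^{[r]})$ and use the standard fact that a relatively ample divisor becomes ample after adding a sufficiently positive pullback of an ample class from the base \cite[Theorem 1.7.8]{lazarsfeldPositivity}: thus $H\diff-\lambda E+N\,p_r^*A_0$ is ample for $N\gg0$, and with $A=NA_0$ this exhibits $\lambda$ as an element of the set defining $\lambda_{\max}$. Hence $\lambda\le\lambda_{\max}$, and letting $\lambda\uparrow\varepsilon_{\textup{inf}}(H,r)$ yields $\varepsilon_{\textup{inf}}(H,r)\le\lambda_{\max}$, completing the proof. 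I expect the main obstacle to be precisely the relative-ampleness step: one must apply a form of the criterion that avoids the (failing) flatness hypothesis and carefully justify the reduction from all scheme-theoretic points to closed points, both handled by the Jacobson argument above.
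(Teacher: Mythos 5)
Your proof is correct and follows essentially the same route as the paper: both inequalities are reduced to the fibers of $p_r$ via \Cref{lem:fundamental}, and the inequality $\varepsilon_{\textup{inf}}(H,r)\le\lambda_{\max}$ is obtained from relative ampleness of $H\diff-\lambda E$ plus a sufficiently positive pullback of an ample class from $X^{[r]}$. The only differences are cosmetic: you spell out the passage from fibrewise to relative ampleness (which the paper leaves implicit, citing only the relative-to-absolute step) using openness of the relatively ample locus over the Jacobson base, and you prove the easy inequality by restricting the ample class to a closed fiber, where $p_r^*A$ is trivial, rather than by exhibiting a Nakai--Moishezon curve $C$ in a fiber with $A.C=0$ as the paper does.
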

\begin{proof}
    Let $p_r\colon X^{[r,r+1]} \to X^{[r]}$ be the canonical projection. By \Cref{lem:fundamental} if $\lambda < \varepsilon_{\textup{inf}}(H,r)$ the divisor $H\diff-\lambda E$ is $p_r$-ample. By \cite[Prop 1.7.10]{lazarsfeldPositivity} tensoring with a positive enough ample line bundle $A\in \Pic X^{[r]}$ we get that $H\diff-\lambda E + A$ is ample.
    On the other hand, let $\lambda > \varepsilon_{\textup{inf}}(H, r)$. Then by the Nakai-Moishezon criterion there exists a $Z\in X^{[r]}$ and a curve $C\in \mathrm{Bl}_Z\, X$ such that $(H\diff - \lambda E)|_{\Bl_Z X}.C < 0$ as $E$ restricts to the exceptional divisor on the blowup. Thus, now regarding the curve $C$ as a curve in $X^{[r,r+1]}$ via the inclusion $\Bl_Z X \subset X^{[r,r+1]}$ we get
    \begin{equation*}
        \left(H\diff - \lambda E + A\right).C < 0
    \end{equation*}
    for any divisor $A\in \Pic X^{[r]}$, as $A.C = 0$ as $C$ lies in a fiber of $p_r\colon X^{[r,r+1]}\to X^{[r]}$.
\end{proof}

\subsection*{The effective cone of the nested Hilbert scheme}
Let $X$ be a surface and $H$ be an ample divisor.
In this section we will link the effective cone of nested Hilbert schemes to the supremum Seshadri constant $\varepsilon_{\textup{sup}}(H, r)$. To do so, we recall the following lemma from \cite{szembergremarks}: A curve $C$ is Nagata submaximal for some $r$ with respect to $H$ if 
\begin{equation*}
    \frac{C.H}{\sum_{i=1}^r m_i} < \sqrt{\frac{H.H}{r}}
\end{equation*}
for $r$ points with multiplicity $m_i$.
\begin{lemma}
\label{cor:symmetricAction}
    Let $X$ be a surface with $\Pic X = \ZZ H$. Then, the divisor class of the strict transform on the blowup of $r$ very general points of a submaximal Nagata curve is of the form 
    \begin{equation*}
        C = D - m(E_1+\ldots+ E_r) - kE_i
    \end{equation*}
    for some fixed $D\in \Pic X$ and any $C_\sigma = D - m(E_{\sigma(1)}+\ldots+ E_{\sigma(r)}) - kE_{\sigma(i)}$ is effective for any permutation $\sigma\in S_r$.
\end{lemma}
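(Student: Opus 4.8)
The plan is to deduce the effectivity of every $C_\sigma$ from the $S_r$-symmetry governing very general points, made concrete through the monodromy of the universal family of blow-ups over configuration space. I take the submaximal Nagata curve to have strict transform of the displayed shape $D-m(E_1+\ldots+E_r)-kE_i$, so that $D\in\Pic X$ is a multiple of $H$, every exceptional divisor occurs with the common multiplicity $m$, and a single index $i$ carries the extra multiplicity $k$; for $\Pic X=\ZZ H$ and very general points this balanced-except-one shape is the one produced by choosing the submaximal curve of least degree, as in \cite{szembergremarks}. The substance of the statement is then that the extra multiplicity may be moved to any of the $r$ points. Since $\{E_{\sigma(1)},\ldots,E_{\sigma(r)}\}=\{E_1,\ldots,E_r\}$, the class $C_\sigma$ differs from $C$ only by replacing $E_i$ with $E_{\sigma(i)}$, so it suffices to produce, for each index $l$, an effective curve on $Y:=\Bl_{\{x_j\}}X$ in the class $D-m\sum_j E_j-kE_l$.

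First I would record that the very general locus may be taken $S_r$-invariant: the countable union of proper subvarieties of $X^r$ to be avoided can be replaced by its (still countable) $S_r$-orbit, so that if $(x_1,\ldots,x_r)$ is very general then so is every permutation, and every blow-up in sight is the same surface $Y$ with only its exceptional divisors relabelled. Over the ordered very general locus $F\subseteq X^r$ I would form the relative blow-up $\mathcal B\to F$ with its labelled exceptional divisors $\mathcal E_1,\ldots,\mathcal E_r$ and the fibrewise class $\alpha=D-m\sum_j\mathcal E_j-k\mathcal E_i$; the submaximal curves of this numerical type fit into a component $\mathcal H$ of the relative Hilbert scheme, proper over the base, non-empty over every very general configuration by hypothesis. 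A point of $\mathcal H$ over an ordering whose $i$-th entry is $x_l$ is precisely an effective curve on $Y$ of class $D-m\sum_j E_j-kE_l$, so the claim becomes the assertion that $\mathcal H$ meets the fibre over such an ordering.

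The heart of the proof is the monodromy transport. Writing $F\to X^{(r)}_\circ$ for the étale $S_r$-cover given by forgetting the ordering on the very general locus, the associated monodromy $\pi_1\big(X^{(r)}_\circ\big)\to S_r$ is surjective (two very general points can be swapped by moving one around the other, and transpositions generate $S_r$); crucially this surjectivity survives the removal of countably many proper subvarieties, since the swapping loops can be chosen to avoid them. Given $\sigma\in S_r$, I would choose a loop $\gamma$ in $X^{(r)}_\circ$ based at $\{x_j\}$ with monodromy $\sigma$ and transport the given effective curve $C$ along $\gamma$ inside the proper family $\mathcal H$. As the unordered configuration traverses $\gamma$ and returns to $\{x_j\}$, the labelling of the points — equivalently of the exceptional divisors — is permuted by $\sigma$, so the transported curve is an honest effective curve on $Y$ whose distinguished exceptional divisor has moved from $E_i$ to $E_{\sigma(i)}$; that is, it represents $C_\sigma$. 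Letting $\sigma$ range over $S_r$ produces all the $C_\sigma$.

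The step I expect to be the main obstacle is making this transport rigorous: one must lift $\gamma$ from the base to the proper family $\mathcal H$ so that the endpoint is again a genuine effective curve rather than a mere numerical class. I would handle this by restricting to a generic real-analytic arc realising $\gamma$, over which generic smoothness and Ehresmann's theorem make $\mathcal H$ a topological fibre bundle and hence allow path-lifting, and by invoking properness of $\mathcal H$ to guarantee that the transported point still parameterises an effective curve. A secondary point to check is that $F$ remains connected after deleting the countable bad locus, so that the monodromy argument is not vacuous; this holds because removing countably many subvarieties of real codimension at least two from an irreducible complex variety leaves it path-connected.
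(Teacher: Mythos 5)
The paper itself does not prove this lemma; it defers entirely to \cite[Cor.\ 2.8]{szembergremarks}, whose argument is the standard semicontinuity one. Your strategy --- exploit the $S_r$-symmetry of ``very general'' via the universal family of blow-ups over configuration space --- is the right idea and is essentially what the cited proof does, but your execution of the key step has a genuine gap. The transport of the curve along a loop $\gamma$ is carried out by declaring $\mathcal H\to F$ a topological fibre bundle over a ``generic real-analytic arc'' via Ehresmann's theorem. This does not work: the fibres of $\mathcal H\to F$ are the linear systems $\PP\, H^0\bigl(Y_t,\alpha_t\bigr)$, whose dimension is only upper semicontinuous, jumps on closed subsets, and can drop to $-1$ (empty fibre); $\mathcal H\to F$ is proper but in general nowhere a submersion, so Ehresmann does not apply. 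Moreover you cannot choose the arc ``generically'' --- its endpoints are forced to be the two orderings of the \emph{fixed} configuration $\{x_j\}$, which may well lie in the jumping locus, so there is no way to guarantee the arc avoids it. A secondary issue is the assertion that a single irreducible component $\mathcal H$ of the relative Hilbert scheme is non-empty over \emph{every} very general configuration: the submaximal curves over different configurations could a priori live in different components, and one needs the countability argument (there are only countably many components, and an irreducible variety over $\CC$ is not a countable union of proper closed subvarieties) to find one component dominating the base.

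The good news is that once these points are repaired, the entire monodromy apparatus becomes unnecessary. For the fixed numerical class $\alpha=D-m\sum_j\mathcal E_j-k\mathcal E_1$ on the universal blow-up over the ordered configuration space $F$, the locus $W=\{t\in F: h^0(Y_t,\alpha_t)>0\}$ is \emph{closed} by semicontinuity (equivalently, it is the image of the proper map $\mathcal H\to F$). By hypothesis and the countability argument above, $W$ contains the very general locus, which is dense in the irreducible variety $F$; hence $W=F$. Evaluating at the ordered tuple that places $x_{\sigma(i)}$ in the first slot --- which gives the \emph{same} surface $Y$ with relabelled exceptional divisors --- yields an effective curve in the class $D-m\sum_j E_j-kE_{\sigma(i)}$, which is exactly $C_\sigma$. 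No path-lifting, connectivity of the complement of the bad locus, or surjectivity of $\pi_1\to S_r$ is needed; only the closedness of $W$ and the density of the very general locus. I would recommend replacing the third and fourth paragraphs of your argument with this properness/semicontinuity step.
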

\begin{proof}
    See \cite[Cor 2.8 and its proof]{szembergremarks}.
\end{proof}
\begin{remark}
\label{rem:EdgeEffConeBlowUp}
    We remark the following implications of the corollary above, as already noted by  Dumnicki--Küronya--Maclean--Szemberg 
 in \cite[Lemma 2.2]{szembergRationality}: If $X$ is a surface with $\Pic X = \ZZ H$ and $\Bl_r X \to X$ is the blowup of $r$ very general points with exceptional divisors $E_i$ then 
    \begin{equation*}
        H-\frac{H^2}{r\varepsilon(H,r)}\sum_{i=1}^r E_i\in \partial\Eff (\Bl_r X)
    \end{equation*}
    is $\QQ$-effective and contained in the boundary of the pseudo-effective cone. \par
    If there are no submaximal Nagata curves the statement is clear, thus we assume that there is a submaximal curve $C_i = D - m(E_1+\ldots+ E_r) - kE_i$ for some $1\le i\le r$, $m,k\in \ZZ$ and $D\in\Pic X$ that computes the Seshadri constant. By the corollary above also $C_j$ is effective for all $1\le j\le r$ and therefore, the divisor
    \begin{equation*}
        \sum_{i=1}^r C_i = rD -\left(\sum_{i=1}^r m_i\right)\sum_{i=1}^r E_i
    \end{equation*}
    is effective. As $H-\varepsilon(H,r)\sum_{i=1}^r E_i$ is nef and $(H-\varepsilon(H,r)\sum_{i=1}^r E_i).\sum_{i=1}^r C_i = 0$ this implies that $\sum C_i$ is not big and thus, the claim follows.
\end{remark}
With this we can construct a divisor that parameterizes the submaximal Nagata curves: Recall that $E\subset X^{[r,r+1]}$ is the exceptional divisor of the blow up $X^{[r,r+1]} \to X^{[r]}\times X$.

\begin{theorem}
\label{thm:SeshadriNagataOnNested}
    Let $X$ be a surface with $\Pic X = \ZZ H$. We have
    \begin{equation*}
        \frac{H^2}{r\varepsilon(H,r)} = \sup_\lambda \{ \lambda \,|\, H\diff- \lambda E + A \textup{ is }\QQ\textup{-effective for some ample } A\in\Pic X^{[r]}\}.
    \end{equation*}
\end{theorem}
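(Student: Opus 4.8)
The plan is to transpose the proof of \Cref{thm:vareps} from ampleness to $\QQ$-effectivity, passing between the total space $X^{[r,r+1]}$ and a very general fibre of $p_r\colon X^{[r,r+1]}\to X^{[r]}$. Write $\mu=\tfrac{H^2}{r\varepsilon(H,r)}$ for the left-hand side and $S$ for the supremum on the right. The geometric dictionary is the one already used in \Cref{lem:fundamental}: for a reduced $Z\in X^{[r]}$ the fibre $p_r^{-1}(Z)$ is the blowup $\Bl_Z X$ at $r$ distinct points (\Cref{rem:fiber_of_nested}), and under the inclusion $\Bl_Z X\hookrightarrow X^{[r,r+1]}$ of \Cref{lem:ExceptionalDivOnNested} we have $H\diff|_{\Bl_Z X}=H$, $E|_{\Bl_Z X}=\sum_{i=1}^r E_i$, and $A|_{\Bl_Z X}=0$ for any $A$ pulled back from $X^{[r]}$, since a fibre is contracted to a point by $p_r$. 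Thus the class in question restricts to $H-\lambda\sum E_i$ on such a fibre, and the whole statement reduces to comparing its effectivity on $\Bl_Z X$ with \Cref{rem:EdgeEffConeBlowUp}. It then suffices to prove $S\ge\mu$ and $S\le\mu$ separately.

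For $S\ge\mu$ I would argue by pushing forward over the locus where the fibration is tame. Let $U\subset X^{[r]}$ be the open dense locus of reduced subschemes, over which $p_r$ is flat with fibre the honest blowup $\Bl_Z X$. By \Cref{rem:EdgeEffConeBlowUp} the class $H-\mu\sum E_i$ is $\QQ$-effective on $\Bl_Z X$ for very general $Z\in U$; clearing denominators, fix $m$ so that $D:=m(H\diff-\mu E)$ is a line bundle whose restriction to the very general fibre carries a section. Flatness over $U$ gives upper semicontinuity of $h^0$, so $\{Z\in U: h^0(p_r^{-1}(Z),D|_{p_r^{-1}(Z)})\ge 1\}$ is closed; containing a dense set, it is all of $U$, whence the generic fibre has a section and $p_{r*}\oO(D)$ is a nonzero coherent sheaf on the projective variety $X^{[r]}$. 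Twisting by a sufficiently positive ample $A'\in\Pic X^{[r]}$ yields a global section, and the projection formula gives $H^0(X^{[r,r+1]},\oO(D)\otimes p_r^*\oO(A'))\neq 0$, so that $H\diff-\mu E+\tfrac{1}{m}A'$ is $\QQ$-effective. As $\tfrac{1}{m}A'$ is again ample, this forces $\mu\le S$.

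For $S\le\mu$ I would restrict to a fibre. Suppose $H\diff-\lambda E+A$ were $\QQ$-effective for some ample $A$ and some $\lambda>\mu$, say $m'(H\diff-\lambda E+A)\sim E'$ with $E'$ effective. Choose $Z$ very general so that simultaneously $p_r^{-1}(Z)=\Bl_Z X$ is a blowup at distinct points with $H-\mu\sum E_i$ on the boundary of its pseudo-effective cone (\Cref{rem:EdgeEffConeBlowUp}) and $p_r^{-1}(Z)\not\subset\operatorname{Supp}(E')$; such $Z$ exist because the first condition removes only a countable union of proper closed subsets while the second removes a single proper closed subset, and over $\CC$ the very general locus meets any dense open set. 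Restricting the section cutting out $E'$ to this fibre exhibits $(H\diff-\lambda E+A)|_{\Bl_Z X}=H-\lambda\sum E_i$ as $\QQ$-effective. But $H$ is big on $\Bl_Z X$ (pullback of the ample $H$ along a birational morphism) and $H-\mu\sum E_i$ lies on $\partial\Eff(\Bl_Z X)$, so by convexity $H-\lambda\sum E_i$ lies strictly outside the pseudo-effective cone for $\lambda>\mu$ — a contradiction. Hence $S\le\mu$, and together with the previous paragraph $S=\mu$.

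The main obstacle is the ``for some ample $A$'' quantifier, that is, the passage between fibrewise $\QQ$-effectivity and global $\QQ$-effectivity on the total space: unlike ampleness, where \cite[Prop.\ 1.7.10]{lazarsfeldPositivity} supplies a clean relative-to-global statement, effectivity is not an open condition, so the forward direction must be routed through the pushforward sheaf together with a semicontinuity argument. Here the subtlety is that $p_r$ need not be flat over all of $X^{[r]}$ (fibres over non-reduced schemes may acquire components $\PP^{n_i}$ of dimension $>2$), which is precisely why I would restrict the semicontinuity argument to the flat locus $U$ of reduced subschemes. A secondary point is ensuring that restricting an effective divisor to a very general fibre really returns an effective divisor, which needs the chosen fibre to avoid $\operatorname{Supp}(E')$ while still being very general enough for \Cref{rem:EdgeEffConeBlowUp} to apply.
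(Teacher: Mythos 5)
Your proof is correct and follows essentially the same route as the paper's: both directions pass through restriction to the very general fibre $\Bl_Z X$, using \Cref{rem:EdgeEffConeBlowUp} together with the pushforward $p_{r*}\oO(D)$ twisted by a sufficiently ample class for the lower bound, and restriction of an effective divisor to a very general fibre avoiding its support for the upper bound. The only cosmetic differences are that you justify $p_{r*}\oO(D)\neq 0$ via semicontinuity over the flat locus (a detail the paper asserts without comment) and close the upper bound by convexity of $\Eff(\Bl_Z X)$ rather than by the paper's intersection computation $\bigl(H-\tfrac{H^2}{r\lambda}\sum E_i\bigr).\bigl(H-\lambda\sum E_i\bigr)=0$.
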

\begin{proof}
    Let $\Bl_r X \to X$ be the blow up at $r$ very general points. 
    At first, we will show that if $H-\lambda\sum_{i=1}^r E_i$ is effective for some $\lambda > 0$, then also $H\diff - \lambda E + A$ is effective for some ample $A\in \Pic X^{[r]}$. This together with \Cref{rem:EdgeEffConeBlowUp} shows the inequality LHS $\le$ RHS in the theorem.\par
    For $p\colon X^{[r,r+1]}\to X^{[r]}$ we see that the divisor $D = H\diff-\lambda E\in \Pic X^{[r,r+1]}$ restricts to the divisor $H-\lambda \sum_{i=1}^r E_i$ on the fibers of $p$ which are isomorphic to the blowup of $r$ distinct points. Thus, by assumption $D$ restricted to the very general fiber is effective. \par
    Therefore, $p_*\oO(D)$ is non-zero and by tensoring with some ample $A$ we get
    \begin{equation*}
        H^0(X^{[r,r+1]}, D\otimes p^*A) = H^0(X^{[r]}, p_*\oO(D)\otimes A) \neq 0,
    \end{equation*}
    which gives the claim.\par
    To show the other inequality, we again observe that if $H\diff- \lambda E + A$ is effective for some $A\in \Pic X^{[r]}$, then the restriction to the very general fiber of $p$ is again effective: Otherwise the divisor would either contain the very general fiber or not meet it at all: But these two cases cannot happen, as $D$ restricted to a fiber is non-trivial. Thus, 
    \begin{equation*}
        D|_{\Bl_r X} = H-\lambda \sum E_i
    \end{equation*}
    is effective for the very general blow up. That is, $H-\frac{H^2}{r\lambda} \sum E_i$ is not ample and we get that $\varepsilon(H, r)\le \frac{H^2}{r\lambda}$. In other words, 
    \begin{equation*}
        \frac{H^2}{r\varepsilon(H,r)} \ge \lambda.
    \end{equation*}
\end{proof}

\section{Examples of infimum Seshadri constants and a question of Ryan--Yang}
\label{sec:applications}
\subsection*{An example for rational surfaces and K3 surfaces}
Contrary to the case of Hilbert schemes the cones of nested Hilbert schemes are not well understood. However, Ryan--Yang \cite{ryanNefConeNested} computed the nef cone of $X^{[r,r+1]}$ for some choices of $X$ and $r$. Thereby we can calculate the infimum Seshadri constants in these cases.

\begin{example}
    Let $X = \PP^2$ be the projective plane and $H$ a hyperplane. In \cite[Prop. 5.2]{ryanNefConeNested} it was shown that for $r\ge 2$ the nef cone of the nested Hilbert scheme for $\PP^2$ is spanned by 
    \begin{equation*}
    \label{eq:generatingsetp2}
        H\diff, H^{[r]}, H^{[r]}-\frac{1}{r-1}B^{[r]}, H^{[r+1]}-\frac{1}{r}B^{[r+1]}.
    \end{equation*}
    This shows that $\varepsilon_{\textup{inf}}(H, r) = \frac{1}{r} = \frac{1}{r}\varepsilon(H)$.
\end{example}
\begin{example}
    For $e>0$ let $\FF_e$ be the Hirzebruch surface $\PP(\oO_{\PP^1}\oplus\oO_{\PP^1}(-e))$. Denote by $C$ the unique section with $C.C=-e$ and let $F$ be a fiber.
    In \cite[Prop. 5.3]{ryanNefConeNested} it was shown that the nef cone of the nested Hilbert scheme of $\FF_e$ is generated by
    \begin{align*}
        F\diff, C\diff+eF\diff, C^{[r]}+eF^{[r]}, F^{[r]}, C^{[r]}+(e+1)F^{[r]}-\frac{1}{r}B^{[r]},\\ C^{[r+1]}+(e+1)F^{[r+1]}-\frac{1}{r+1}B^{[r+1]},
    \end{align*}
    when $r\ge 2$, see also Bertram--Coskun \cite[Theorem 1]{bertramBirGeometryHirzebruch} for the nef cone of $\FF_e^{[r]}$.
    For an ample $H = aC+bF$ this yields
    \begin{equation*}
        \varepsilon_{\textup{inf}}(H, r) =\frac{1}{r} \min (a, b-ae) = \frac{1}{r} \varepsilon(H).
    \end{equation*}
\end{example}
\begin{remark}
    The behaviour experienced in the examples above is expected in the following sense: If $\varepsilon(H)$ is attained by a smooth curve, we get for the infimum over distinct points $$\inf_{x_1,\ldots, x_r\in X}{\varepsilon}(H, x_1, \ldots, x_r) = \frac{1}{r}\varepsilon(H),$$
    as a simple calculation shows that $\varepsilon(H, x_1,\ldots, x_r)\ge \frac{1}{r}\varepsilon(H)$ for any collection of distinct points $x_i \in X$ and any surface.
\end{remark}
\begin{example}
    Let $X$ be a very general K3 surface with $\Pic X = \ZZ H$. 
    By Ryan--Yang \cite{ryanNefConeNested} the nef cone of $X^{[r,r+1]}$ is spanned by 
    \begin{equation*}
        H\diff, H^{[r]}, H^{[r]}-\frac{H^2}{r+\frac{H^2}{2}}B^{[r]}, H^{[r+1]}-\frac{H^2}{r+1+\frac{H^2}{2}}B^{[r+1]}, 
    \end{equation*}
    when $r\ge \frac{H^2}{2}+1$. Thus, for these choices of $r$ we have
    \begin{equation*}
        \varepsilon_{\textup{inf}}(H,r) = \frac{H^2}{r+1+\frac{H^2}{2}}.
    \end{equation*}
    In fact, the Seshadri constant is computed by a nodal rational curve $R$ in the primitive linear system $|H|$: These curves possess $\frac{H^2}{2}+1$ nodes, i.e. choosing these nodes and $\frac{H^2}{2}+1-r$ remaining smooth points gives the claim.
\end{example}
It turns out that for $r\gg 0$ this is the general behaviour, i.e.\@ the class of the curve computing the Seshadri constant is minimal: We need to introduce the slightly altered Seshadri constant by allowing only distinct points:
\begin{equation*}
    \Tilde{\varepsilon}_{\textup{inf}} (H,r) = \inf_{x_1,\ldots, x_r}\varepsilon(H, x_1,\ldots, x_r)
\end{equation*}
\begin{theorem}
    Let $X$ be a surface and $H$ an ample divisor. Then there is an $r_0\in \NN$ and a curve $C\subset X$ of minimal degree $C.H = \min \{C'.H\,|\, C'\subset X \textup{ a curve}\}$ such that 
    \begin{equation*}
        \Tilde{\varepsilon}_{\textup{inf}}(H,r) = \frac{C.H}{\sum_{i=1}^{r_0} m_i+r-r_0}
    \end{equation*}
    for all $r\gg 0$, where $m_i\in \NN$ are the multiplicities of $C$ at $r_0$ distinct points $x_i\in C$.
\end{theorem}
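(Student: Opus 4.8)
The plan is to collapse the double infimum into a single optimisation over curves and then to compare curves according to their $H$-degree. First I would interchange the two infima to write
\begin{equation*}
\Tilde{\varepsilon}_{\textup{inf}}(H,r) = \inf_{x_1,\dots,x_r}\ \inf_C \frac{C.H}{\sum_i m(C,x_i)} = \inf_C \frac{C.H}{M_r(C)},\qquad M_r(C):=\max_{x_1,\dots,x_r}\sum_{i=1}^r m(C,x_i),
\end{equation*}
so that $M_r(C)$ is the sum of the $r$ largest multiplicities of $C$. Applying the mediant inequality $\frac{a+a'}{b+b'}\ge\min(\frac ab,\frac{a'}{b'})$ to a decomposition $C=C_1+C_2$, together with $M_r(C_1+C_2)\le M_r(C_1)+M_r(C_2)$, shows the infimum may be taken over integral curves only. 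For an integral $C$, writing $\delta(C)=\sum_x(m(C,x)-1)$ and $s(C)$ for the number of its singular points, one always has $M_r(C)\le r+\delta(C)$, with equality once $r\ge s(C)$.

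The heart of the argument is an upper bound on $M_r(C)$ valid for curves of \emph{large} degree. Letting $m_1\ge\dots\ge m_r$ be the $r$ largest multiplicities and using Cauchy--Schwarz together with the classical estimate $\sum_x\binom{m(C,x)}{2}\le p_a(C)$, I get
\begin{equation*}
M_r(C)^2\le r\sum_{i=1}^r m_i^2 = r\Bigl(M_r(C)+2\sum_i\tbinom{m_i}{2}\Bigr)\le r\bigl(M_r(C)+2p_a(C)\bigr),
\end{equation*}
hence $M_r(C)\le\tfrac12\bigl(r+\sqrt{r^2+8r\,p_a(C)}\bigr)$. Feeding in the genus bound from adjunction and the Hodge index theorem, namely $p_a(C)\le\frac{(C.H)^2}{2H^2}+\frac a2(C.H)+1=:Q(C.H)$ where $a$ is fixed so that $aH-K_X$ is ample (which forces $C^2\le(C.H)^2/H^2$ and $C.K_X\le a\,(C.H)$), produces the lower bound $\frac{C.H}{M_r(C)}\ge\frac{2\,C.H}{r+\sqrt{r^2+8r\,Q(C.H)}}$, whose right-hand side tends to the Nagata value $\sqrt{H^2/r}$ as $C.H\to\infty$. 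Conceptually: highly singular curves of large degree can only realise the asymptotic ratio $\sqrt{H^2/r}$, which for $r\gg0$ is far larger than the $\sim d_0/r$ coming from a fixed minimal-degree curve.

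Next I would set $d_0=\min\{C.H\mid C\subset X\}$ and $\delta_0=\max\{\delta(C)\mid C\text{ integral},\ C.H=d_0\}$; the maximum exists and is attained by some curve $C_0$ since $\delta(C)\le p_a(C)\le Q(d_0)$ is bounded and integer-valued. Taking $C_0$ with its $r_0:=s(C_0)$ singular points gives the upper bound $\Tilde{\varepsilon}_{\textup{inf}}(H,r)\le\frac{d_0}{r+\delta_0}$ for $r\ge r_0$. For the matching lower bound I would estimate $\frac{C.H}{M_r(C)}$ for an arbitrary integral $C$ by its degree $d=C.H$: if $d=d_0$, then $M_r(C)\le r+\delta(C)\le r+\delta_0$ yields $\frac{C.H}{M_r(C)}\ge\frac{d_0}{r+\delta_0}$ at once; if $d>d_0$, I would insert the displayed lower bound into $\frac{C.H}{M_r(C)}>\frac{d_0}{r+\delta_0}$, clear denominators, square, and use $(2d-d_0)^2-d_0^2=4d(d-d_0)$ to reduce everything to the single condition $d(d-d_0)\,r>2d_0^2\,Q(d)$. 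Because $Q(d)$ is quadratic with leading term $\tfrac{d^2}{2H^2}$, the factor $d$ cancels, so this holds for \emph{every} $d\ge d_0+1$ as soon as $r$ exceeds the finite number $R_0:=\sup_{d\ge d_0+1}\frac{d_0^2 d/H^2+a d_0^2+2d_0^2/d}{d-d_0}$ (finite, with limit $d_0^2/H^2$ as $d\to\infty$).

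Combining these, for $r>\max(R_0,r_0)$ the infimum over all curves is attained among minimal-degree curves and equals $\frac{d_0}{r+\delta_0}=\frac{C_0.H}{\sum_{i=1}^{r_0}m_i+(r-r_0)}$, where the $m_i$ are the multiplicities of $C_0$ at its $r_0$ singular points; this is precisely the asserted formula. I expect the main obstacle to be exactly this uniformity in the degree: a priori a sequence of curves of growing degree and growing genus could push the ratio down, and the entire role of the Cauchy--Schwarz/genus estimate is to bound their contribution below by (essentially) the Nagata constant $\sqrt{H^2/r}$ \emph{independently of the degree}, which is what lets $d$ cancel in the final inequality and makes a single threshold $R_0$ suffice.
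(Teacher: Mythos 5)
Your proof is correct and follows essentially the same route as the paper's: Cauchy--Schwarz on the multiplicities combined with adjunction, the Hodge index theorem and a linear bound on $K_X.C$ in terms of $C.H$ to show that, beyond a uniform threshold in $r$, only curves of minimal $H$-degree can compute the infimum, after which one maximizes the multiplicity defect among those. The only cosmetic difference is that you encode the genus input as $\sum_x\binom{m(C,x)}{2}\le p_a(C)$ and bound $M_r(C)$ in closed form, whereas the paper derives the same contradiction from the non-negativity of the arithmetic genus of the strict transform on the blowup.
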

\begin{proof}
    Let $K_X$ be the canonical divisor and let $D\subset X$ be a curve such that $D.H$ is minimal, i.e.\@ $D.H = \min \{C'.H\,|\, C'\subset X \textup{ a curve}\}$. Suppose there is an irreducible curve $C\subset X$ with multiplicities $m_i$ at $r$ distinct points $x_i\in C$ such that
    \begin{equation*}
        \frac{C.H}{\sum_{i=1}^r m_i} < \frac{D.H}{r}.
    \end{equation*}
    We want to show that $C.H = D.H$ holds if $r\gg 0$. For that we observe that by assumption
    \begin{equation*}
        r\frac{C.H}{D.H}< \sum_{i=1}^r m_i.
    \end{equation*}
    Denote by $\Bl_r X\to X$ the blow up of $X$ in the points $x_i\in C$ with $E_i$ the exceptional divisors and let $\Tilde{C}\subset \Bl_r X$ be the strict transform of $C$. 
    Then, $K_{\Bl_r X} = K_X + \sum_{i=1}^r E_i$ and $\Tilde{C}= C-\sum_{i=1}^r m_i E_i$ by \cite[Chapter V.3]{HartshorneBook}.
    Hence, by the adjunction formula the arithmetic genus $p_a$ of $\Tilde{C}$ satisfies
    \begin{align*}
        2p_a(\Tilde{C})-2 &= (K_{\Bl_r X}+\Tilde{C}).\Tilde{C}\\
        &= C^2+K_X.C - \sum_{i=1}^r m_i^2 + \sum_{i=1}^r m_i\\
        &\le C^2+K_X.C -\frac{\left(\sum_{i=1}^r m_i\right)^2}{r} +\sum_{i=1}^r m_i\\
        &\le C^2+K_X.C - \frac{C.H}{D.H}\sum_{i=1}^r m_i +\sum_{i=1}^r m_i\\
        &\le  C^2+K_X.C - r(\frac{C.H}{D.H}-1)\frac{C.H}{D.H}.
    \end{align*}
    We will show that the latter is $<-2$ if $C.H > D.H$ and $r\gg 0$.
    For the $K_X.C$ term we observe that $\max_{\{C \textup{ is effective}\}} \frac{C.K_X}{C.H}<\infty$ as $H$ is ample, i.e.\@ $K_X.C \le c\,C.H$ for a constant $c$ only depending on the surface and $H$.\par
    By the Hodge Index theorem $C^2\le \frac{(C.H)^2}{H^2}$.
    Thus, with $r' = \frac{r}{(D.H)^2}$ we get
    \allowdisplaybreaks[1]
    \begin{align*}
        C^2-r(\frac{C.H}{D.H}-1)\frac{C.H}{D.H} &= C^2-r'(C.H-D.H)C.H\\
        &\le \frac{(C.H)^2}{H^2}-r'(C.H-D.H)C.H\\
        &= C.H(C.H(\frac{1}{H^2}-r')+r'D.H)\\
        &\le C.H((D.H+1)(\frac{1}{H^2}-r') + r'D.H)\\
        &= CH(\frac{D.H}{H^2}+\frac{1}{H^2}-r').
    \end{align*}
    Putting everything together, we get
    \begin{align*}
        2p_a(\Tilde{C})-2 &\le K_X.C + CH(\frac{D.H}{H^2}+\frac{1}{H^2}-r') \\
        &\le C.H(c+\frac{D.H}{H^2}+\frac{1}{H^2}-r') \ll 0
    \end{align*}
    for $r\gg 0$ independent of the curve $C$, a contradiction.\par
    As we observed before, $2p_a(C)-2 = K_X.C + C.C$ is bounded by a constant (depending only on the surface and $H$) times $(C.H)^2$. Thus, there is a maximum $r_0$ of singular points among all curves with $C.H$ minimal. Choosing a curve $C_0$ such that $\sum_{i=1}^{r_0} m_i$
    is maximal, where the $m_i$ are multiplicities at (not necessarily singular) points, this curve $C_0$ indeed computes the Seshadri constant $\Tilde{\varepsilon}_{\textup{inf}}(X,H,r)$ for all $r\gg 0$ with
    \begin{equation*}
        \Tilde{\varepsilon}_{\textup{inf}}(H,r)= \frac{C_0.H}{\sum_{i=1}^{r_0} (m_i-1) + r}.
    \end{equation*}
\end{proof}

\subsection*{An answer to a question of Ryan--Yang}
In the paper \cite{ryanNefConeNested} Ryan--Yang posed the following question after observing that for $r\gg 0$ the nested Hilbert schemes $\PP^{2[r,r+1]}$ are not log-Fano:
\begin{question*}
    For which $r\gg0$ is the nested Hilbert scheme $\PP^{2[r,r+1]}$ a Mori-Dream-Space?
\end{question*}
We will answer this for the more general case of $X$ being a surface with $\Pic X = \ZZ H$ with the assumption of the Nagata--Biran-\Cref{conj:NagataBiran}.
\begin{proposition}
    Let $X$ be a surface with $\Pic X = \ZZ H$ and $r\in \NN$ such that $\sqrt{\frac{H^2}{r}}\notin\QQ$. Suppose
    \begin{equation*}
        \varepsilon(H,r) = \sqrt{\frac{H^2}{r}}.
    \end{equation*} 
    Then $X^{[r,r+1]}$ is not a Mori-Dream-Space.
\end{proposition}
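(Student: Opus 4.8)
The plan is to show that if the Nagata–Biran conjecture holds with an irrational value $\sqrt{H^2/r}$, then $X^{[r,r+1]}$ has an irrational wall in one of its cones, and since Mori-Dream-Spaces have rational polyhedral effective (and nef) cones, this is a contradiction. The key connecting result is \Cref{thm:SeshadriNagataOnNested}, which tells us that the boundary of the effective cone of $X^{[r,r+1]}$ in the direction of $E$ is governed by the quantity $\frac{H^2}{r\varepsilon(H,r)}$. Under the hypothesis $\varepsilon(H,r) = \sqrt{H^2/r}$, this boundary value becomes
\begin{equation*}
    \frac{H^2}{r\varepsilon(H,r)} = \frac{H^2}{r\sqrt{H^2/r}} = \sqrt{\frac{H^2}{r}},
\end{equation*}
which is irrational by assumption.

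First I would invoke \Cref{thm:SeshadriNagataOnNested} to identify the supremal $\lambda$ for which $H\diff - \lambda E + A$ is $\QQ$-effective (for some ample $A \in \Pic X^{[r]}$) with the number $\frac{H^2}{r\varepsilon(H,r)} = \sqrt{H^2/r}$. The point is that this describes a genuine boundary ray of the pseudo-effective cone $\overline{\Eff}(X^{[r,r+1]})$: for $\lambda$ below this value the class is effective (after twisting by an ample class from the base, which does not change the direction along $E$), and for $\lambda$ above it the class fails to be effective, as the proof of \Cref{thm:SeshadriNagataOnNested} shows via restriction to the very general fiber. Thus the effective cone has a supporting wall whose slope in the plane spanned by $H\diff$ and $E$ (modulo the pullback classes from $X^{[r]}$) is the irrational number $\sqrt{H^2/r}$.

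Next I would recall the characterization of Mori-Dream-Spaces: a normal projective $\QQ$-factorial variety is a Mori-Dream-Space only if its Cox ring is finitely generated, which forces the pseudo-effective cone $\overline{\Eff}$ to be rational polyhedral (see e.g.\ Hu–Keel). In particular all of its extremal rays, and all of its walls, are spanned by classes defined over $\QQ$. The irrational boundary direction exhibited above cannot lie on a rational polyhedral cone, giving the desired contradiction and proving that $X^{[r,r+1]}$ is not a Mori-Dream-Space.

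The main obstacle I anticipate is making precise that the value $\sqrt{H^2/r}$ really determines an extremal \emph{ray} of the full cone $\overline{\Eff}(X^{[r,r+1]})$ rather than merely an irrational number attached to some effectivity threshold in a two-dimensional slice. \Cref{thm:SeshadriNagataOnNested} controls only the coefficient of $E$ relative to $H\diff$ after twisting by an arbitrary ample class pulled back from $X^{[r]}$, so one must argue that the threshold is insensitive to this freedom, i.e.\ that the irrationality is intrinsic to the boundary of $\overline{\Eff}(X^{[r,r+1]})$ and not an artifact of the chosen twist. Concretely, one should observe that since $A$ ranges over classes pulled back from the base $X^{[r]}$, varying $A$ moves the class within a rational subspace and cannot alter the critical $E$-coefficient, which is pinned down by the fiberwise effectivity computation. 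Once this is established, the irrationality of the resulting extremal ray is immediate and the Hu–Keel criterion finishes the argument.
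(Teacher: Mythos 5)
Your proposal is correct and follows exactly the paper's argument: combine \Cref{thm:SeshadriNagataOnNested} (which makes the effectivity threshold equal to the irrational number $\sqrt{H^2/r}$) with the Hu--Keel fact that the pseudo-effective cone of a Mori-Dream-Space is rational polyhedral. The paper states this in two sentences; you additionally flag and resolve the point that the twist by $A$ ranges over a rational subspace, so the irrational threshold genuinely obstructs rational polyhedrality, which is a worthwhile elaboration of the same proof.
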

\begin{proof}
    The pseudo-effective cone of a Mori-Dream-Space is rational polyhedral, see \cite[Prop. 1.11]{hikeelMoriDreamSpace}. This contradicts the assumption $\sqrt{\frac{H^2}{r}}\notin \QQ$ together with \Cref{thm:SeshadriNagataOnNested}.
\end{proof}

\begin{proposition}
    The spaces $\PP^{2[r,r+1]}$ are not Mori-Dream-Spaces for all perfect squares $r = s^2>9$.
\end{proposition}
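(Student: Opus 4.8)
The plan is to reduce the claim for $\PP^2$ with $r=s^2>9$ to the previous proposition, which says that $X^{[r,r+1]}$ fails to be a Mori dream space whenever $\varepsilon(H,r)=\sqrt{H^2/r}$ and this quantity is irrational. Since $\PP^2$ carries $H^2=1$, the hypothetical Nagata value is $\sqrt{1/r}=1/s$, which is \emph{rational}, so the earlier proposition does not apply directly. The essential point is therefore not irrationality of the effective threshold but rather the failure of the pseudo-effective cone of $\PP^{2[r,r+1]}$ to be rational polyhedral for another reason — namely that for perfect squares $r=s^2>9$ the Nagata conjecture is actually \emph{known} (by Nagata's original result on $\PP^2$ blown up at $r\ge 10$ very general points, which holds precisely for $r$ a perfect square $\ge 10$), so we may substitute the genuine value $\varepsilon(H,r)=1/s$ into \Cref{thm:SeshadriNagataOnNested} and must then argue that rationality of a \emph{single} extremal ray is not enough for the whole cone to be rational polyhedral.

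Concretely, first I would recall Nagata's theorem: for $r=s^2\ge 10$ very general points in $\PP^2$ one has $\varepsilon(H,r)=1/s=\sqrt{1/r}$, so the hypothesis of \Cref{thm:SeshadriNagataOnNested} is satisfied with $H^2=1$. By that theorem, the threshold
\begin{equation*}
    \sup_\lambda\{\lambda \,|\, H\diff-\lambda E + A \text{ is }\QQ\text{-effective for some ample }A\in\Pic X^{[r]}\}
\end{equation*}
equals $\tfrac{H^2}{r\varepsilon(H,r)}=\tfrac{1}{s^2\cdot(1/s)}=\tfrac{1}{s}$, which is rational, so this particular boundary face of $\Eff(\PP^{2[r,r+1]})$ is rational. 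The real work is to show that the effective cone nonetheless has \emph{infinitely many} extremal rays, contradicting rational polyhedrality. For this I would pull back the classically known structure of $\Eff(\Bl_r\PP^2)$ for $r=s^2>9$: once $r\ge 10$ the blowup of $\PP^2$ at very general points is no longer a Mori dream space (indeed, by results going back to Nagata and formalized through the failure of the Zariski decomposition / the $(-1)$-curve structure, $\Bl_r\PP^2$ has an effective cone with infinitely many extremal rays and a non-polyhedral round part), and the fibration $p\colon X^{[r,r+1]}\to X^{[r]}$ transports this pathology upward via the restriction argument already used in the proof of \Cref{thm:SeshadriNagataOnNested}.

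The cleanest route is thus: take an infinite sequence of distinct extremal effective classes $D_k$ on the very general fiber $\Bl_r\PP^2\subset \PP^{2[r,r+1]}$, express each as the restriction of $H\diff-\lambda_k E + A_k$ by the same $p_*$-pushforward computation ($H^0(X^{[r,r+1]},D\otimes p^*A)=H^0(X^{[r]},p_*\oO(D)\otimes A)$) appearing in \Cref{thm:SeshadriNagataOnNested}, and conclude that $\Eff(\PP^{2[r,r+1]})$ contains infinitely many distinct extremal rays; hence it is not rational polyhedral and $\PP^{2[r,r+1]}$ is not a Mori dream space by \cite[Prop. 1.11]{hikeelMoriDreamSpace}. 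I expect the main obstacle to be the second of these steps: the proof of \Cref{thm:SeshadriNagataOnNested} only controls a \emph{single} extremal face of $\Eff(X^{[r,r+1]})$ coming from the Nagata submaximal curves, so I must verify that the full non-polyhedrality of $\Eff(\Bl_r\PP^2)$ — not just the location of one boundary ray — genuinely descends to (or rather ascends from) the fibers, which requires knowing that the relevant extremal classes on $\Bl_r\PP^2$ do not all collapse or become proportional after pushforward by $p$. Establishing that the assignment $D_k\mapsto (\lambda_k,A_k)$ preserves extremality, or at least produces infinitely many distinct rays, is the crux.
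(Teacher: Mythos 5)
You correctly notice that $\sqrt{H^2/r}=1/s$ is rational here, so the irrationality argument of the preceding proposition does not apply. But the route you then take — producing infinitely many extremal rays of $\Eff(\PP^{2[r,r+1]})$ by lifting the infinitely many extremal effective classes of $\Bl_r\PP^2$ — cannot work. The restriction map $N^1(\PP^{2[r,r+1]})\to N^1(\Bl_r\PP^2)$ has image of rank $2$, spanned by $H$ and $\sum_{i=1}^r E_i$ (the classes $H^{[r]},B^{[r]}$ pulled back from the base restrict to zero on a fiber), whereas the non-polyhedrality of $\Eff(\Bl_r\PP^2)$ lives entirely in the non-symmetric directions: the $(-1)$-curve classes are not of the form $aH-\lambda\sum E_i$ and hence are not restrictions of \emph{any} divisor class on the nested Hilbert scheme. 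So the step you yourself flag as the crux — that the classes $D_k$ can be written as restrictions of $H\diff-\lambda_k E+A_k$ and yield infinitely many distinct rays upstairs — fails at the first hurdle, and since $N^1(\PP^{2[r,r+1]})$ has rank only $4$, there is no reason to expect $\Eff(\PP^{2[r,r+1]})$ to be non-polyhedral at all.

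The idea you are missing is that being a Mori dream space gives more than rational polyhedrality of $\overline{\Eff}$: by \cite[Prop. 1.11]{hikeelMoriDreamSpace} the pseudo-effective cone is generated by finitely many \emph{effective} divisors, so the supremum in \Cref{thm:SeshadriNagataOnNested} would have to be \emph{attained}, i.e.\@ the boundary ray through $H\diff-\tfrac{1}{s}E+A$ would contain an honest effective divisor $D$. Such a $D$ does not contain the very general fiber $\Bl_r\PP^2\subset\PP^{2[r,r+1]}$, so it restricts to an effective divisor there, of class a positive multiple of $H-\tfrac{1}{s}\sum_{i=1}^r E_i$. Nagata's theorem for $r=s^2\ge 10$ very general points says precisely that this class is not $\QQ$-effective — a contradiction. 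In other words, the obstruction is not the shape of the cone but the non-attainment of its boundary by effective classes; rationality of the ray $\lambda=1/s$ is beside the point.
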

\begin{proof}
    Let $\Bl_r \PP^2$ be the blow up of $r$ very general points with exceptional divisors $E_i$.
    In his original paper \cite{nagataOriginalPaper} Nagata shows, that $H-\sqrt{\frac{1}{r}}\sum_{i=1}^r E_i$ is not $\QQ$-effective and thus $\varepsilon(H, r) = \sqrt{\frac{1}{r}}$. By \Cref{thm:SeshadriNagataOnNested} we have
    \begin{equation*}
        \sup_\lambda \{ \lambda \,|\, H\diff- \lambda E + A \textup{ is }\QQ\textup{-effective for some ample } A\in \Pic X^{[r]}\} = \sqrt{\frac{1}{r}} = \frac{1}{s}.
    \end{equation*}
    
    On the other hand, by \cite[Prop. 1.11]{hikeelMoriDreamSpace} the pseudo-effective cone of a Mori-Dream-Space is generated by finitely many effective divisors. Therefore, there exists an effective divisor $D$ that is a rational multiple of $H\diff-{\frac{1}{s}}E + A$ for some $A\in \Pic X^{[r]}$. However, this divisor would restrict to an effective divisor on the blowup $\Bl_{r} X\subset X^{[r,r+1]}$ at $r$ very general points, a contradiction to Nagata's result.
\end{proof}

\section{Some new bounds on Seshadri constants}
\label{sec:BoundsOnSeshadri}
In this section we will provide new bounds on one point Seshadri constants. The strategy is to embed $\Bl_x X$ into $X^{[r]}$ for some large $r$ and get some bounds via the nef cone of the Hilbert scheme which are computed in \cite{BologneseNefConeHilbert} via Bridgeland stability conditions. For some fixed $n$ denote $N = \binom{n}{2}$.
\begin{proposition}
\label{prop:BlowUpPower}
    Let $X$ be a surface, $x\in X$ a point and $n> 0$ an integer. Then the blowup of the ideal $\mm_x^n$ naturally leads to an embedding $\Bl_x X \subset X^{[N,N+1]}$ such that $E, H\diff\in \Pic X^{[N,N+1]}$ restrict to $nE_x, H$ on $\Bl_x X$ respectively, where $E_x$ is the exceptional divisor.
\end{proposition}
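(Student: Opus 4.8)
The plan is to realise $\Bl_x X$ inside a single fibre of the projection $p_N\colon X^{[N,N+1]}\to X^{[N]}$ and to read off the two restriction statements from \Cref{lem:ExceptionalDivOnNested}. First I would take the fat point $\xi=V(\mm_x^n)$, whose ideal sheaf is $\mm_x^n$; counting the Hilbert function of a fat point on a smooth surface gives $\dim_\CC \oO_{X,x}/\mm_x^n=\sum_{k=0}^{n-1}(k+1)=\binom{n+1}{2}=N$, so that $\xi\in X^{[N]}$ and $\Bl_\xi X\hookrightarrow X^{[N,N+1]}$ is exactly the inclusion produced by \Cref{lem:ExceptionalDivOnNested}. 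That lemma already records everything I need about this inclusion: it lands in the fibre $p_N^{-1}(\xi)$, its composite with the residue morphism $\mathrm{res}\colon X^{[N,N+1]}\to X$ is the blowup morphism $\Bl_\xi X\to X$, and the exceptional divisor $E$ of $X^{[N,N+1]}\to X^{[N]}\times X$ restricts to the exceptional divisor of $\Bl_\xi X\to X$.

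The heart of the argument is the identification of $\Bl_\xi X=\Bl_{\mm_x^n}X$ with the ordinary point blowup $\Bl_x X$, together with the computation of the resulting exceptional class. Here I would use the standard fact that an ideal and its powers have canonically isomorphic blowups: since the Rees algebra of $\mm_x^n$ is the $n$-th Veronese subalgebra of the Rees algebra of $\mm_x$, taking $\mathrm{Proj}$ yields an $X$-isomorphism $\Bl_{\mm_x^n}X\cong \Bl_{\mm_x}X=\Bl_x X$. Under this identification the pullback of $\mm_x^n$ to $\Bl_x X$ is $(\mm_x\cdot\oO_{\Bl_x X})^{\otimes n}=\oO(-E_x)^{\otimes n}=\oO(-nE_x)$, so the exceptional Cartier divisor of $\Bl_{\mm_x^n}X\to X$ is exactly $nE_x$. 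Combined with the last clause of \Cref{lem:ExceptionalDivOnNested}, this gives $E|_{\Bl_x X}=nE_x$, the first of the claimed restrictions.

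For the second restriction I would again invoke \Cref{lem:ExceptionalDivOnNested}, which identifies $\mathrm{res}|_{\Bl_x X}$ with the blowup morphism $\pi\colon\Bl_x X\to X$. Since $H\diff=\mathrm{res}^*H$, this gives $H\diff|_{\Bl_x X}=\pi^*H$; passing to $\Pic(\Bl_x X)=\pi^*\Pic X\oplus\ZZ E_x$ and writing $\pi^*H$ simply as $H$ under the usual abuse of notation, we obtain that $H\diff$ restricts to $H$, as claimed.

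The only genuinely non-formal input is the crux of the second paragraph: the identification $\Bl_{\mm_x^n}X\cong\Bl_x X$ and the accompanying factor-of-$n$ scaling of the exceptional divisor; once this is granted, both restriction statements drop out of \Cref{lem:ExceptionalDivOnNested}. A secondary point to keep honest is that, by \Cref{rem:fiber_of_nested}, the fibre $p_N^{-1}(\xi)$ is in general the union of $\Bl_x X$ with several projective spaces meeting it along $E_x$, so I must make sure the inclusion and both intersection computations are carried out on the distinguished component $\Bl_x X$ and not on the extra components.
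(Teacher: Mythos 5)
Your proof is correct and follows essentially the same route as the paper: embed the fat point $V(\mm_x^n)$ via \Cref{lem:ExceptionalDivOnNested}, identify $\Bl_{\mm_x^n}X\cong\Bl_x X$, and observe that the exceptional divisor scales by $n$ while $H\diff$ restricts through the residue map; your Rees-algebra justification of the isomorphism is merely more explicit than the paper's appeal to the ``natural isomorphism''. One side remark: your length count $\binom{n+1}{2}=\tfrac{n(n+1)}{2}$ is the correct one and agrees with how $N$ is actually used in \Cref{seshadribound1} and later sections, even though the paper's displayed definition $N=\binom{n}{2}$ appears to be a typo.
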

\begin{proof}
    By \Cref{lem:ExceptionalDivOnNested} we can embed $\Bl_{\mm_x^n} X\subset X^{[N, N+1]}$, as $\mm_x^n$ defines a closed subscheme in $X$ with support $x$ of length $N$.
    
    Again, by \Cref{lem:ExceptionalDivOnNested} the exceptional divisor $E$ of the nested Hilbert scheme restricts to the exceptional divisor $E_{\mm^n_x}$ on the blowup $\Bl_{\mm^n_x} X$.
    On the other hand, the natural isomorphism 
    \begin{equation*}
        p\colon \Bl_{\mm_x} X \to \Bl_{\mm_x^n} X,
    \end{equation*}
    satisfies $p^*E_{\mm^n_x} = nE_x$, where $E_x$ is the exceptional divisor of $\Bl_x X$, giving the claim. 

    The statement for $H\diff$ and $H$ directly follows from \Cref{lem:ExceptionalDivOnNested} and the fact $p^*H = H$.
\end{proof}
\begin{proposition}
\label{seshadribound1}
    Let $X$ be a surface with Picard group $\Pic X = \ZZ H$ with effective generator $H$ such that $K_X = aH$ with $a\ge 0$. Then
    \begin{equation*}
        \varepsilon(H) \ge \frac{2H^2n}{(a+1)H^2+n(n+1)+2}
    \end{equation*}
    for all integers $n$ such that $n^2+n \ge H^2$.    
\end{proposition}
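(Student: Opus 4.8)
The plan is to pull the computation of the nef cone of the ordinary Hilbert scheme $X^{[N+1]}$ from \cite{BologneseNefConeHilbert} back to the nested Hilbert scheme and then restrict the resulting nef class to an embedded blowup of a fat point. Fix an integer $n$ with $n^2+n\ge H^2$ and let $N=\tfrac{n(n+1)}{2}$ be the length of the fat point cut out by $\mm_x^n$, so that $2(N+1)=n(n+1)+2$. For every $x\in X$, \Cref{prop:BlowUpPower} gives an embedding $\Bl_x X\hookrightarrow X^{[N,N+1]}$ whose image lies in the single fiber of the projection $p_N\colon X^{[N,N+1]}\to X^{[N]}$ over the point $\mm_x^n\in X^{[N]}$ (by \Cref{lem:ExceptionalDivOnNested}), and along which $E$ restricts to $nE_x$ and $H\diff$ restricts to $H$.

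Put
\begin{equation*}
    t=\frac{2H^2}{(a+1)H^2+2(N+1)}.
\end{equation*}
The essential input is that $H^{[N+1]}-tB^{[N+1]}$ is nef on $X^{[N+1]}$: this is the nef divisor produced by the Bridgeland-stability computation of \cite{BologneseNefConeHilbert} for a surface with $\Pic X=\ZZ H$ and $K_X=aH$, whose range of validity is $N+1\ge\tfrac{H^2}{2}+1$, which is precisely the hypothesis $n^2+n\ge H^2$. As a sanity check, for a K3 surface $a=0$ and $t=\tfrac{2H^2}{H^2+2(N+1)}$ agrees with the threshold appearing in the very general K3 example above.

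The next step is purely formal. Choose $c\gg0$ so that $A:=cH^{[N]}-tB^{[N]}$ is ample on $X^{[N]}$; this is possible because the nef cone of $X^{[N]}$ is the two-dimensional cone spanned by $H^{[N]}$ and some ray $H^{[N]}-t_NB^{[N]}$ with $t_N>0$, and $A$ lies in its interior once $c$ is large. Substituting the relations $H\diff=H^{[N+1]}-H^{[N]}$ and $E=B^{[N+1]}-B^{[N]}$, the boundary terms recombine into
\begin{equation*}
    H\diff-tE+A=\bigl(H^{[N+1]}-tB^{[N+1]}\bigr)+(c-1)H^{[N]},
\end{equation*}
which is a sum of pullbacks along $p_{N+1}$ and $p_N$ of nef classes, hence nef on $X^{[N,N+1]}$.

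Finally I would restrict the nef class $H\diff-tE+A$ to $\Bl_x X$. Since this blowup lies in a single fiber of $p_N$ and $A$ is pulled back from $X^{[N]}$, the class $A$ restricts to zero; as $E$ and $H\diff$ restrict to $nE_x$ and $H$, the restriction equals $H-tnE_x$ and is therefore nef. By the definition of the Seshadri constant this gives $\varepsilon(H,x)\ge tn$, and as $x$ was arbitrary
\begin{equation*}
    \varepsilon(H)=\inf_{x\in X}\varepsilon(H,x)\ge tn=\frac{2H^2n}{(a+1)H^2+n(n+1)+2}.
\end{equation*}
The only substantial ingredient here is the nef divisor of \cite{BologneseNefConeHilbert} together with the identification of its validity range with $n^2+n\ge H^2$; the remainder is bookkeeping in $\Pic X^{[N,N+1]}$ (mind the factor-$2$ normalization of $B$) and the stability of nefness under restriction. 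The step demanding the most care is making the displayed decomposition an honest sum of \emph{nef} classes rather than a mere numerical identity.
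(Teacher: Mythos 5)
Your proposal is correct and follows essentially the same route as the paper: both rest on the nef class of \cite{BologneseNefConeHilbert} on $X^{[N+1]}$ (valid precisely when $N+1\ge \frac{H^2}{2}+1$, i.e.\ $n^2+n\ge H^2$), pulled back to $X^{[N,N+1]}$ and restricted to the embedded blowup $\Bl_x X$ of \Cref{prop:BlowUpPower}, where $H\diff\mapsto H$ and $E\mapsto nE_x$. Your intermediate decomposition $H\diff-tE+A=(H^{[N+1]}-tB^{[N+1]})+(c-1)H^{[N]}$ is a harmless (and in fact slightly more careful) elaboration of the paper's one-line "pull back and restrict" step, differing only by the sign normalization of $B$ used in the cited reference.
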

\begin{proof}
    By \cite[Prop 4.4.]{BologneseNefConeHilbert} the nef cone of the Hilbert scheme $X^{[r+1]}$ contains  the  cone spanned by
    \begin{equation*}
        H^{[r+1]}, \left(\frac{a+1}{2}+\frac{r+1}{H^2}\right)H^{[r+1]}+ B^{[r+1]}
    \end{equation*}
    for $r+1\ge \frac{H^2}{2}+1$. Therefore, this applies to our case when $r = N$. Pulling back the nef classes via the morphism $X^{[r,r+1]}\to X^{[r+1]}$ \Cref{prop:BlowUpPower} gives that
    \begin{equation*}
        \left(\frac{a+1}{2}+\frac{N+1}{H^2}\right)H+ nE
    \end{equation*}
    is nef on $\Bl_x X$ for any $x\in X$, giving the claim.
\end{proof}
\begin{corollary}
    \label{seshadribound2}
    Let $X$ be a K3 surface with Picard group $\Pic X = \ZZ H$. Then 
    \begin{equation*}
        \varepsilon(X, H) \ge \frac{2H^2\alpha}{H^2+\alpha(\alpha+1)+2}.
    \end{equation*}
    where $\alpha =\left\lceil \sqrt{H^2+2}\right\rceil$.\qedhere
\end{corollary}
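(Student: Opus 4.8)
The plan is to read the corollary off directly from \Cref{seshadribound1}. A K3 surface has trivial canonical bundle, so $K_X = 0 = 0\cdot H$ and we are in the situation of that proposition with $a = 0$. The bound there then reads
\begin{equation*}
    \varepsilon(H) \ge \frac{2H^2 n}{H^2 + n(n+1) + 2},
\end{equation*}
valid for every integer $n$ satisfying the admissibility condition $n^2 + n \ge H^2$. All that remains is to pick $n$ judiciously and to confirm it is admissible.

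To motivate the choice I would optimize the right-hand side over real $n$. Inverting it gives
\begin{equation*}
    \frac{H^2 + n(n+1) + 2}{2H^2 n} = \frac{1}{2H^2}\left(\frac{H^2 + 2}{n} + n + 1\right),
\end{equation*}
and by AM--GM the quantity $\frac{H^2+2}{n} + n$ is minimized at $n = \sqrt{H^2 + 2}$. Since $n$ must be an integer, the natural candidate is $\alpha = \lceil \sqrt{H^2 + 2}\rceil$, the least integer at least $\sqrt{H^2+2}$.

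Finally I would verify admissibility. From $\alpha \ge \sqrt{H^2 + 2}$ we get $\alpha^2 \ge H^2 + 2 > H^2$, whence $\alpha^2 + \alpha > H^2$, so \Cref{seshadribound1} applies with $n = \alpha$; substituting $n = \alpha$ and $a = 0$ produces precisely the claimed inequality. There is no genuine obstacle here: the statement is a one-point evaluation of an already established bound. The sole subtle point — and essentially the only reason a ceiling rather than a floor appears — is that the floor $\lfloor\sqrt{H^2+2}\rfloor$, though closer to the real optimum, can violate the admissibility condition. For instance when $H^2 = 22$ the floor is $4$ with $4^2 + 4 = 20 < 22$, whereas $\alpha = 5$ gives $5^2 + 5 = 30 \ge 22$. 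Taking the ceiling guarantees $\alpha^2 \ge H^2 + 2$, hence admissibility, uniformly in $H^2$, so the argument goes through for every value at once.
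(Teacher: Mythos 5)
Your proposal is correct and matches the paper's (implicit) argument: the corollary is stated with no separate proof precisely because it is the specialization of \Cref{seshadribound1} to $a=0$ with $n=\alpha=\lceil\sqrt{H^2+2}\rceil$, and your verification that $\alpha^2+\alpha\ge H^2+2>H^2$ confirms admissibility. The AM--GM motivation for the choice of $\alpha$ is a nice addition but not needed for correctness.
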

\begin{corollary}
\label{seshadribound3}
    Let $X$ be a surface of general type with Picard group $\Pic X = \ZZ H$, $K_X = aH$ and effective generator $H$. Then 
    \begin{equation*}
        \varepsilon(X, H) \ge \frac{2H^2n}{(a+1)H^2+n(n+1)+2}
    \end{equation*}
    for any $n\in \NN$.    
\end{corollary}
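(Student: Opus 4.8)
The plan is to reduce the statement entirely to Proposition \ref{seshadribound1}, exploiting the fact that for a surface of general type the coefficient $a$ is strictly positive. Indeed, since $K_X = aH$ is big and $H$ is ample, we must have $a \ge 1$. Writing
\[
    f(n) = \frac{2H^2 n}{(a+1)H^2 + n(n+1) + 2},
\]
the assertion is exactly $\varepsilon(H) \ge f(n)$ for every $n \in \NN$, and Proposition \ref{seshadribound1} already supplies this whenever $n^2 + n \ge H^2$. It therefore remains only to treat the finitely many small values of $n$ with $n^2 + n < H^2$, and I would do so by a monotonicity argument rather than any new geometric input.

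First I would study $f$ as a function of a real variable. A direct differentiation shows that $f'(n)$ has the same sign as $(a+1)H^2 + 2 - n^2$, so that $f$ is strictly increasing on the range where $n^2 < (a+1)H^2 + 2$. The role of the general type hypothesis is that this increasing range comfortably contains the boundary of the regime covered by the proposition. Letting $n_0$ be the least integer with $n_0^2 + n_0 \ge H^2$, minimality gives $n_0^2 - n_0 < H^2$, hence $n_0^2 < H^2 + n_0 \le 2H^2 \le (a+1)H^2 + 2$ (using $n_0 \le H^2$ and $a \ge 1$). Consequently $f$ is increasing on the whole interval $[0, n_0]$.

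With this in hand the corollary follows at once. For $n \ge n_0$ the estimate is Proposition \ref{seshadribound1} verbatim. For $n < n_0$, monotonicity gives $f(n) \le f(n_0)$, while the proposition applied at $n_0$ (which now lies in its range) gives $\varepsilon(H) \ge f(n_0)$; combining these yields $\varepsilon(H) \ge f(n_0) \ge f(n)$. Thus $\varepsilon(H) \ge f(n)$ for all $n$, as claimed.

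I expect the only genuine subtlety to be the inequality $n_0^2 < (a+1)H^2 + 2$, which is what allows the monotonicity argument to reach all the way down to $n = 0$. This is precisely where the general type assumption is used: for $a = 0$ (the K3 case) the maximizer of $f$ can fall strictly below $n_0$, that is, outside the range of the proposition, which is exactly why K3 surfaces require the separate optimal choice of $n$ made in Corollary \ref{seshadribound2} instead of a blanket bound valid for every $n$.
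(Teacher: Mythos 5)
Your argument is correct, and it is worth noting that the paper itself offers no proof of this corollary at all: it is stated bare, and the subsequent corollary's proof only says ``this follows as before,'' i.e.\ by re-running the embedding argument of \Cref{prop:BlowUpPower} with the nef-cone computation of \cite{BologneseNefConeHilbert}. That argument, as written in the proof of \Cref{seshadribound1}, only covers $n$ with $n^2+n\ge H^2$, so strictly speaking the blanket quantifier ``for any $n\in\NN$'' needs the extra step you supply. Your route is a purely formal reduction: since $f(n)=\tfrac{2H^2n}{(a+1)H^2+n(n+1)+2}$ increases exactly while $n^2<(a+1)H^2+2$, and since general type forces $a\ge 1$ (as $K_X=aH$ is big and $H$ is ample), the threshold $n_0=\min\{n: n^2+n\ge H^2\}$ satisfies $n_0^2<H^2+n_0\le 2H^2\le (a+1)H^2+2$, so every excluded $n<n_0$ gives a bound weaker than the one already established at $n_0$. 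All the inequalities check out (including $n_0\le H^2$ and the minimality estimate $n_0^2-n_0<H^2$). What your approach buys is a self-contained justification requiring no strengthening of the input from \cite{BologneseNefConeHilbert}; what it also reveals, and what is worth stating explicitly, is that the corollary contains no information beyond \Cref{seshadribound1}, since the maximizer of $f$ over the integers lies at $n\approx\sqrt{(a+1)H^2+2}$, which is already inside the proposition's range when $a\ge 1$. Your closing remark correctly identifies why the K3 case ($a=0$) is handled separately in \Cref{seshadribound2} by the specific choice $\alpha=\lceil\sqrt{H^2+2}\,\rceil$, which one checks always satisfies $\alpha^2+\alpha\ge H^2$.
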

\begin{corollary}
    Let $X$ be a surface of general type with Picard group $\Pic X = \ZZ H$ and $p_g\neq 0$. Then 
    \begin{equation*}
        \varepsilon(X, H) \ge \frac{2bH^2\alpha}{(ab+b^2)H^2+\alpha(\alpha+1)+2}
    \end{equation*}
    for $\alpha = \left\lceil\sqrt{(ab+b^2)H^2+2}\right\rceil$, where $K_X = aH$ and $b$ is minimal with $bH$ effective.
\end{corollary}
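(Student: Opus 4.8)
The plan is to deduce the bound from \Cref{seshadribound1} by replacing the ample generator $H$ — which need not be effective — with an effective ample class. Since $p_g\neq 0$, the canonical divisor $K_X = aH$ is effective, so the integer $b$, defined as the smallest positive integer with $bH$ effective, exists and satisfies $1\le b\le a$. Put $L = bH$; this is ample and effective. As Seshadri constants are homogeneous of degree one in the polarization, $\varepsilon(X,H) = \tfrac1b\,\varepsilon(X,L)$, so it suffices to bound $\varepsilon(X,L)$ from below.

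I would then run the proof of \Cref{seshadribound1} verbatim with $L$ in place of $H$. Its only input is the nef-cone estimate \cite[Prop 4.4]{BologneseNefConeHilbert}, which for the polarization $L$ places the class
\begin{equation*}
    \left(\frac{K_X.L/L^2+1}{2}+\frac{m}{L^2}\right)L^{[m]}+ B^{[m]}
\end{equation*}
in the nef cone of $X^{[m]}$ as soon as $m\ge \tfrac{L^2}{2}+1$. Here the role of the integer $a$ is taken by the slope $\tfrac{K_X.L}{L^2} = \tfrac{abH^2}{b^2H^2} = \tfrac ab$ and that of $H^2$ by $L^2 = b^2H^2$. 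Pulling this class back along $X^{[N,N+1]}\to X^{[N+1]}$ and restricting to $\Bl_x X$ through the embedding of \Cref{prop:BlowUpPower} coming from $\mm_x^n$ (so that $E$ restricts to $nE_x$ and $L\diff$ to $L$) yields, exactly as before,
\begin{equation*}
    \varepsilon(X,L)\ \ge\ \frac{2L^2\,n}{\left(\tfrac{K_X.L}{L^2}+1\right)L^2 + n(n+1)+2}\ =\ \frac{2b^2H^2\,n}{(ab+b^2)H^2 + n(n+1)+2}
\end{equation*}
for every integer $n$ with $n(n+1)\ge L^2 = b^2H^2$.

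Finally I would optimize over $n$: the right-hand side, as a function of a real variable $n$, is maximized at $n = \sqrt{(ab+b^2)H^2+2}$, which dictates the choice $\alpha = \left\lceil\sqrt{(ab+b^2)H^2+2}\right\rceil$. Since $a\ge 0$ gives $\alpha^2\ge (ab+b^2)H^2+2 > b^2H^2$, the admissibility condition $\alpha(\alpha+1)\ge L^2$ holds automatically, so the displayed estimate applies with $n=\alpha$. Dividing by $b$ via $\varepsilon(X,H) = \tfrac1b\varepsilon(X,L)$ produces precisely
\begin{equation*}
    \varepsilon(X,H)\ \ge\ \frac{2bH^2\alpha}{(ab+b^2)H^2 + \alpha(\alpha+1)+2}.
\end{equation*}

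The step I expect to require the most care is not geometric but one of applicability: one must confirm that \cite[Prop 4.4]{BologneseNefConeHilbert}, and hence \Cref{seshadribound1}, remains valid for the polarization $L = bH$, which is effective and ample but fails to generate $\Pic X$ when $b>1$, and for the non-integral canonical slope $\tfrac{K_X.L}{L^2}=a/b$. Inspecting that argument, the only data it uses are the ampleness and effectivity of the polarizing class together with the numbers $K_X.L$ and $L^2$; neither the generator hypothesis nor integrality of the slope is actually invoked. Granting this, the reduction is routine, and the hypothesis $p_g\neq 0$ serves exactly to supply the effective multiple $bH$ that makes the Bridgeland-stability input of \cite{BologneseNefConeHilbert} available in the first place.
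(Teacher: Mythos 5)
Your argument is correct and lands on exactly the stated bound, but it reaches the input from \cite{BologneseNefConeHilbert} by a different route than the paper. The paper's proof is a one-liner: repeat the proof of \Cref{seshadribound1} verbatim, but replace \cite[Prop.\ 4.4]{BologneseNefConeHilbert} by \cite[Theorem 4.2]{BologneseNefConeHilbert}, which is already stated for the case that the ample generator $H$ is not effective and carries the parameter $b$ from the start. You instead keep the effective-generator statement and reduce to it by rescaling the polarization to $L=bH$ and using homogeneity of Seshadri constants; a pleasant by-product is that you make explicit why $p_g\neq 0$ is assumed (it forces $K_X=aH$ to be effective, hence $1\le b\le a$), which the paper leaves implicit. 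The two computations agree because in either formulation the extremal nef class on $X^{[m]}$ is
\begin{equation*}
    \left(\frac{K_X.C+C^2}{2\,C.H}+\frac{m}{C.H}\right)H^{[m]}+B^{[m]},
\end{equation*}
with $C=bH$ the minimal effective curve class, and this class does not depend on whether one views $C$ as $b$ times the generator $H$ or as the generator of the sublattice $\ZZ L$. The cost of your route is precisely the applicability issue you flag yourself: \cite[Prop.\ 4.4]{BologneseNefConeHilbert} is stated for a primitive effective polarization, and extending it to the imprimitive class $L$ requires checking that the Gieseker-wall computation only sees the minimal effective curve class together with $K_X.L$ and $L^2$ --- true here because no $kH$ with $0<k<b$ is effective, but verifying this essentially re-derives the content of Theorem 4.2 in this situation. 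Citing Theorem 4.2 directly, as the paper does, avoids that detour; your rescaling is a legitimate and slightly more self-contained alternative provided that verification is actually carried out rather than asserted.
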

\begin{proof}
    This follows as before, but with \cite[Theorem 4.2]{BologneseNefConeHilbert} which takes the case into account that $H$ might not be effective. 
\end{proof}

In the following we give a table of explicit bounds and compare these to known results. 
Let $X$ be a K3 surface with $\Pic X = \ZZ H$. In \cite{knutsenGeneralK3}, Knutsen gave the bound $$\varepsilon(X, H)\ge \lfloor{\sqrt{H^2}}\rfloor,$$ except for some cases in which the bound is $\lfloor\sqrt{H^2}\rfloor-\frac{1}{2\lfloor\sqrt{H^2}\rfloor+1}$. We compare that with our result, which we call  $\alpha_{H.H}$ for simplicity. It turns out, that our bounds are better as soon as $\sqrt{H^2+2}$ is not far from an integer. Numerically testing the degrees up to $H.H = 10^6$, we find that our bound is better in approximately $50$ percent of the cases. We show some cases in which the new bound is better.

    \begin{longtable}{c|c|c|c}
$H.H$& $\alpha_{H.H}$&  $\lfloor\sqrt{H^2}\rfloor$ &  $\sqrt{H^2}$\\\hline
$8$ & $2.133$ & $2$ & $2.828$ \\
$14$ & $3.111$ & $3$ & $3.742$ \\
$22$ & $4.074$ & $4$ & $4.690$  \\
$24$ & $4.235$ & $4$ & $4.899$\\
$32$ & $5.053$ & $5$ & $5.657$  \\
$34$ & $5.231$ & $5$ & $5.831$  \\
$58$ & $7.030$ & $7$ & $7.616$\\
$60$ & $7.164$ & $7$ & $7.746$\\
$62$ & $7.294$ & $7$ & $7.874$\\
$74$ & $8.024$ & $8$ & $8.602$\\
$76$ & $8.143$ & $8$ & $8.718$\\
$78$ & $8.259$ & $8$ & $8.832$\\
$80$ & $8.333$ & $8$ & $8.944$\\

\caption{Bounds for the Seshadri numbers}
\label{tab:SeshadriNumbersK3Bounds}
    \end{longtable}

\section{Questions}
As we have seen in the previous chapter, embedding the blowup of a surface into some Hilbert schemes makes it possible to tackle problems regarding Seshadri constants. We will give some questions regarding this topic:
\begin{question*}
    What is the relation between the Seshadri constant $\varepsilon(X,H)$ of an ample line bundle $H$ on a surface $X$ and the movable and nef cone of the Hilbert scheme $X^{[3]}$?
\end{question*}
\begin{remark}
    Let $X$ be a K3 surface with Picard group $\Pic X = \ZZ H$. If a curve $C\subset X$ computes the Seshadri constant at a point $x\in X$, we have the following exact sequence on the blowup at the point $x$
    \begin{equation*}
        0\to \oO(-\Tilde{C}-2E) \to \oO(-2E) \to \oO_{\Tilde{C}}(-2E)\to 0,
    \end{equation*}
    where $\Tilde{C}$ is the strict transform.
    This induces an exact triangle
    \begin{equation*}
        \mathbb{R}p_*\oO(-\Tilde{C}-2E) \to \mm^2 \to p_*\oO_{\Tilde{C}}(-2E)
    \end{equation*}
    in $D^b(X)$. Is this a destabilising sequence of $\mm^2$ for some Bridgeland stability condition on $D^b(X)$? If so, the corresponding wall, would induce a movable divisor with class $H^{[3]}-\frac{pH^2}{2m+1-p_a(\Tilde{C})}B$, lining up with some of our observations.
\end{remark}

On the other hand, we have seen that there is also a numerical connection between a conjectured bound of $\varepsilon(H,1)$ and $X^{[3]}$ in \Cref{conj:szemberg}. This poses the question if it is possible to prove the latter with Bridgeland stability methods or at least answer the following weaker questions:
\begin{question*}
    Suppose $X$ is a K3 surface with $\Pic X = \ZZ H$ and $H^2$ not a perfect square. Is
    \begin{equation*}
        \varepsilon(H,1) \ge \frac{q}{p}H^2
    \end{equation*}
    for at least one solution of $2p^2-\frac{H^2}{2}q^2 = 1$ or $p^2-H^2q^2 = 1$?
\end{question*}

\begin{question*}
    Is $\varepsilon_{\textup{inf}}(X,H) = \Tilde{\varepsilon}_{\textup{inf}}(X,H)$ for all $r\gg 0$ or even $r>0$?
\end{question*}

It might also be interesting to generalise the technique of \Cref{prop:BlowUpPower} to several points to gain an understanding of $\varepsilon(H,r)$. That is, for any integral zero-dimensional subscheme $Z\in X^{[r]}$ with ideal sheaf $I_Z$ consider the blowup 
\begin{equation*}
    \Bl_{I_Z} X\cong \Bl_{I_Z^n} X \subset X^{[r\frac{n(n+1)}{2}, r\frac{n(n+1)}{2}+1]}.
\end{equation*} 
Restricting the nef cone as in \Cref{prop:BlowUpPower} gives us new bounds for $\varepsilon_{\textup{sup}}(H,r)$.
However, as expected this does not yield good results for $\varepsilon(H,r)$ for $r\gg 0$. We therefore turn our attention to another invariant of those Hilbert schemes.\par
Let $X$ be a K3 surface with Picard rank $1$. Then restricting the non-trivial ray of the movable cone of $X^{[r\frac{n(n+1)}{2}]}$ to $\Bl_{I_Z^n} X \cong \Bl_{I_Z} X$ yields a divisor $H-\alpha_n \sum E_i$ such that $\alpha_n\xrightarrow{n\to\infty} \sqrt{\frac{H^2}{r}}$. With regards to the Nagata--Biran conjecture this poses the question:
\begin{question*}
    For which $n>0$ does the blowup $\Bl_{I_Z^n} X$ of the very general $Z$ not meet the stable base locus of any effective divisor $D\in \Pic X^{[r\frac{n(n+1)}{2}+1]}$ in a positive dimensional subset?
\end{question*}

\printbibliography

@InCollection{biranSymplectic,
 Author = {Biran, Paul},
 Title = {From symplectic packing to algebraic geometry and back},
 BookTitle = {3rd European congress of mathematics (ECM), Barcelona, Spain, July 10--14, 2000. Volume II},
 ISBN = {3-7643-6418-1},
 Pages = {507--524},
 Year = {2001},
 Publisher = {Basel: Birkh{\"a}user},
 Language = {English},
 Keywords = {53D35,57R17,57R40,14J25,14J99},
 zbMATH = {1944691},
 Zbl = {1047.53054}
}

@Article{nuerEnriques,
 Author = {Nuer, Howard},
 Title = {Projectivity and birational geometry of {Bridgeland} moduli spaces on an {Enriques} surface},
 FJournal = {Proceedings of the London Mathematical Society. Third Series},
 Journal = {Proc. Lond. Math. Soc. (3)},
 ISSN = {0024-6115},
 Volume = {113},
 Number = {3},
 Pages = {345--386},
 Year = {2016},
 Language = {English},
 DOI = {10.1112/plms/pdw033},
 Keywords = {14D20,18E30,14J28,14E30},
 zbMATH = {6647887},
 Zbl = {1361.14007}
}

@phdthesis{thesis,
  title        = {Curves on K3 surfaces and Hilbert schemes},
  author       = {Jonas Baltes},
  year         = 2024,
  address      = {Göttingen, Germany},
  doi = {10.53846/goediss-10575},
  school       = {Georg-August-Universität Göttingen},
  type         = {PhD thesis}
}

@misc{python,
  author = {Jonas Baltes},
  title = {Modulispaces on K3 Surfaces},
  year = {2024},
  publisher = {GitHub},
  journal = {GitHub repository},
  howpublished = {\url{https://github.com/jobathe/Moduli-Spaces-Of-Sheaves-On-K3}} 
}

@InCollection{bertramBirGeometryHirzebruch,
 Author = {Bertram, Aaron and Coskun, Izzet},
 Title = {The birational geometry of the {Hilbert} scheme of points on surfaces},
 BookTitle = {Birational geometry, rational curves, and arithmetic. Based on the symposium ``Geometry over closed fields'', St. John, UK, February 2012},
 ISBN = {978-1-4614-6481-5},
 Pages = {15--55},
 Year = {2013},
 Publisher = {New York, NY: Springer},
 Language = {English},
 DOI = {10.1007/978-1-4614-6482-2_2},
 Keywords = {14E30,14C05,14D20,14D23},
 zbMATH = {6211434},
 Zbl = {1273.14032}
}

@Article{szembergremarks,
 Author = {Strycharz-Szemberg, Beata and Szemberg, Tomasz},
 Title = {Remarks on the {Nagata} conjecture},
 FJournal = {Serdica Mathematical Journal},
 Journal = {Serdica Math. J.},
 ISSN = {1310-6600},
 Volume = {30},
 Number = {2-3},
 Pages = {405--430},
 Year = {2004},
 Language = {English},
 Keywords = {14C20,14E25,14J26},
 zbMATH = {2143623},
 Zbl = {1063.14009}
}

@InCollection{BauerPrimer,
 Author = {Bauer, Thomas and Di Rocco, Sandra and Harbourne, Brian and Kapustka, Micha{\l} and Knutsen, Andreas and Syzdek, Wioletta and Szemberg, Tomasz},
 Title = {A primer on {Seshadri} constants},
 BookTitle = {Interactions of classical and numerical algebraic geometry. A conference in honor of Andrew Sommese, Notre Dame, IN, USA, May 22--24, 2008},
 ISBN = {978-0-8218-4746-6},
 Pages = {33--70},
 Year = {2009},
 Publisher = {Providence, RI: American Mathematical Society (AMS)},
 Language = {English},
 Keywords = {14C20,14E25,14J26,14M25},
 zbMATH = {5654490},
 Zbl = {1184.14008}
}

@InCollection{demaillySeshadri,
 Author = {Demailly, Jean-Pierre},
 Title = {Singular {Hermitian} metrics on positive line bundles},
 BookTitle = {Complex algebraic varieties. Proceedings of a conference, held in Bayreuth, Germany, April 2-6, 1990},
 ISBN = {3-540-55235-9},
 Pages = {87--104},
 Year = {1992},
 Publisher = {Berlin etc.: Springer-Verlag},
 Language = {English},
 Keywords = {32L05},
 zbMATH = {125533},
 Zbl = {0784.32024}
}

@Article{szembergRationality,
 Author = {Dumnicki, Marcin and K{\"u}ronya, Alex and Maclean, Catriona and Szemberg, Tomasz},
 Title = {Rationality of {Seshadri} constants and the {Segre}-{Harbourne}-{Gimigliano}-{Hirschowitz} conjecture},
 FJournal = {Advances in Mathematics},
 Journal = {Adv. Math.},
 ISSN = {0001-8708},
 Volume = {303},
 Pages = {1162--1170},
 Year = {2016},
 Language = {English},
 DOI = {10.1016/j.aim.2016.05.025},
 Keywords = {14C20},
 zbMATH = {6636665},
 Zbl = {1347.14002}
}

@Article{szembergConjecture,
 Author = {Farnik, {\L}ucja and Szemberg, Tomasz and Szpond, Justyna and Tutaj-Gasi{\'n}ska, Halszka},
 Title = {Restrictions on {Seshadri} constants on surfaces},
 FJournal = {Taiwanese Journal of Mathematics},
 Journal = {Taiwanese J. Math.},
 ISSN = {1027-5487},
 Volume = {21},
 Number = {1},
 Pages = {27--41},
 Year = {2017},
 Language = {English},
 DOI = {10.11650/tjm.21.2017.7746},
 Keywords = {14C20,14N25},
 URL = {journal.tms.org.tw/index.php/TJM/article/view/7746},
 zbMATH = {6693689},
 Zbl = {1357.14011}
}

@Article{hikeelMoriDreamSpace,
 Author = {Hu, Yi and Keel, Sean},
 Title = {Mori dream spaces and {GIT}.},
 FJournal = {Michigan Mathematical Journal},
 Journal = {Mich. Math. J.},
 ISSN = {0026-2285},
 Volume = {48},
 Pages = {331--348},
 Year = {2000},
 Language = {English},
 DOI = {10.1307/mmj/1030132722},
 Keywords = {14L24,14E30},
 zbMATH = {1700876},
 Zbl = {1077.14554}
}

@Article{BologneseNefConeHilbert,
 Author = {Bolognese, Barbara and Huizenga, Jack and Lin, Yinbang and Riedl, Eric and Schmidt, Benjamin and Woolf, Matthew and Zhao, Xiaolei},
 Title = {Nef cones of {Hilbert} schemes of points on surfaces},
 FJournal = {Algebra \& Number Theory},
 Journal = {Algebra Number Theory},
 ISSN = {1937-0652},
 Volume = {10},
 Number = {4},
 Pages = {907--930},
 Year = {2016},
 Language = {English},
 DOI = {10.2140/ant.2016.10.907},
 Keywords = {14C05,14E30,14J29,14J60},
 zbMATH = {6601547},
 Zbl = {1346.14011}
}

@Book{lazarsfeldPositivity,
 Author = {Lazarsfeld, Robert},
 Title = {Positivity in algebraic geometry {I}: {Classical} setting: {Line} bundles and linear series. {II}: {Positivity} for vector bundles, and multiplier ideals},
 ISBN = {3-540-22528-5},
 Year = {2004},
 Publisher = {Berlin: Springer},
 Language = {English},
 Keywords = {14F17,14C20,14F05},
 zbMATH = {2106914},
 Zbl = {1066.14021}
}

@misc{galati2022rationalEnriques,
      title={Rational curves and Seshadri constants on Enriques surfaces}, 
      author={Concettina Galati and Andreas L. Knutsen},
      year={2022},
      eprint={2212.08191},
      archivePrefix={arXiv},
      primaryClass={math.AG}
}

@Article{ellingsrudNested,
 Author = {Ellingsrud, Geir and Str{\o}mme, Stein A.},
 Title = {An intersection number for the punctual {Hilbert} scheme of a surface},
 FJournal = {Transactions of the American Mathematical Society},
 Journal = {Trans. Am. Math. Soc.},
 ISSN = {0002-9947},
 Volume = {350},
 Number = {6},
 Pages = {2547--2552},
 Year = {1998},
 Language = {English},
 DOI = {10.1090/S0002-9947-98-01972-2},
 Keywords = {14C17,14C05},
 zbMATH = {1135372},
 Zbl = {0893.14001}
}

@InCollection{lehnLecturesHilbert,
 Author = {Lehn, Manfred},
 Title = {Lectures on {Hilbert} schemes},
 BookTitle = {Algebraic structures and moduli spaces. Proceedings of the CRM workshop, Montr\'eal, Canada, July 14-20, 2003},
 ISBN = {0-8218-3568-8},
 Pages = {1--30},
 Year = {2004},
 Publisher = {Providence, RI: American Mathematical Society (AMS)},
 Language = {English},
 Keywords = {14C05,14C17},
 zbMATH = {2145045},
 Zbl = {1076.14010}
}

@Article{nagataOriginalPaper,
 Author = {Nagata, M.},
 Title = {On the 14-th problem of {Hilbert}},
 FJournal = {American Journal of Mathematics},
 Journal = {Am. J. Math.},
 ISSN = {0002-9327},
 Volume = {81},
 Pages = {766--772},
 Year = {1959},
 Language = {English},
 DOI = {10.2307/2372927},
 zbMATH = {3305239},
 Zbl = {0192.13801}
}

@Article{bauerQuarticSeshadri,
 Author = {Bauer, Thomas},
 Title = {Seshadri constants of quartic surfaces},
 FJournal = {Mathematische Annalen},
 Journal = {Math. Ann.},
 ISSN = {0025-5831},
 Volume = {309},
 Number = {3},
 Pages = {475--481},
 Year = {1997},
 Language = {English},
 DOI = {10.1007/s002080050122},
 Keywords = {14J25,14C20},
 zbMATH = {1096011},
 Zbl = {0885.14018}
}

@Article{knutsenGeneralK3,
 Author = {Knutsen, Andreas L.},
 Title = {A note on {Seshadri} constants on general {{\(K3\)}} surfaces},
 FJournal = {Comptes Rendus. Math{\'e}matique. Acad{\'e}mie des Sciences, Paris},
 Journal = {C. R., Math., Acad. Sci. Paris},
 ISSN = {1631-073X},
 Volume = {346},
 Number = {19-20},
 Pages = {1079--1081},
 Year = {2008},
 Language = {English},
 DOI = {10.1016/j.crma.2008.09.008},
 Keywords = {14J28,14C20},
 zbMATH = {5362120},
 Zbl = {1171.14023}
}

@Article{knutsenSeshadri6and8,
 Author = {Galati, Concettina and Knutsen, Andreas L.},
 Title = {Seshadri constants of {{\(K3\)}} surfaces of degrees 6 and 8},
 FJournal = {IMRN. International Mathematics Research Notices},
 Journal = {Int. Math. Res. Not.},
 ISSN = {1073-7928},
 Volume = {2013},
 Number = {17},
 Pages = {4072--4084},
 Year = {2013},
 Language = {English},
 DOI = {10.1093/imrn/rns174},
 Keywords = {14C20,14J28},
 zbMATH = {6438743},
 Zbl = {1314.14013}
}

@Article{ryanNefConeNested,
 Author = {Ryan, Tim and Yang, Ruijie},
 Title = {Nef cones of nested {Hilbert} schemes of points on surfaces},
 FJournal = {IMRN. International Mathematics Research Notices},
 Journal = {Int. Math. Res. Not.},
 ISSN = {1073-7928},
 Volume = {2020},
 Number = {11},
 Pages = {3260--3294},
 Year = {2020},
 Language = {English},
 DOI = {10.1093/imrn/rny088},
 Keywords = {14C05,14E05,14J26,14J28},
 zbMATH = {7441770},
 Zbl = {1479.14009}
}

@Book{HartshorneBook,
 Author = {Hartshorne, Robin},
 Title = {Algebraic geometry},
 FSeries = {Graduate Texts in Mathematics},
 Series = {Grad. Texts Math.},
 ISSN = {0072-5285},
 Volume = {52},
 Year = {1977},
 Publisher = {Springer, Cham},
 Language = {English},
 Keywords = {14Axx,14Fxx,14Hxx,14Jxx,14Exx,14Nxx,14-02},
 DOI = {10.1007/978-1-4757-3849-0},
 zbMATH = {3572315},
 Zbl = {0367.14001}
}

@article{debarre2018hyperk,
  title={Hyper-Kähler manifolds},
  author={Debarre, Olivier},
  journal={Milan Journal of Mathematics},
  volume={90},
  number={2},
  pages={305–387},
  year={2022},
  publisher={Springer},
DOI = {10.1007/s00032-022-00366-x}
}

@article{bayer2014mmp,
 Author = {Bayer, Arend and Macrì, Emanuele},
 Title = {{MMP} for moduli of sheaves on {{\(K3\)}}s via wall-crossing: nef and movable cones, {Lagrangian} fibrations},
 FJournal = {Inventiones Mathematicae},
 Journal = {Invent. Math.},
 ISSN = {0020-9910},
 Volume = {198},
 Number = {3},
 Pages = {505--590},
 Year = {2014},
 Language = {English},
 DOI = {10.1007/s00222-014-0501-8},
 Keywords = {14D20,18E30,14J28,14E30},
 zbMATH = {6379802},
 Zbl = {1308.14011}
}
\end{document}